\theoremstyle{plain}
\newtheorem{theorem}{Theorem}[section]
\newtheorem{lemma}[theorem]{Lemma}
\newtheorem{proposition}[theorem]{Proposition}
\newtheorem{corollary}[theorem]{Corollary}
\theoremstyle{definition}
\theoremstyle{remark}
\newtheorem{remark}[theorem]{Remark}
\begin{document}
	\title[Strong Resolving Graphs of Clean Graphs of Commutative Rings]{Strong Resolving Graphs of Clean Graphs of Commutative Rings}

   \author[Praveen Mathil, Jitender Kumar]{Praveen Mathil, Jitender Kumar$^{^{*}}$}
   \address{Department of Mathematics, Birla Institute of Technology and Science Pilani, Pilani-333031, India}
 \email{maithilpraveen@gmail.com,  jitenderarora09@gmail.com}

\begin{abstract}
Let $R$ be a ring with unity. The clean graph $\textnormal{Cl}(R)$ of a ring $R$ is the simple undirected graph whose vertices are of the form $(e,u)$, where $e$ is an idempotent element and $u$ is a unit of the ring $R$ and two vertices $(e,u)$, $(f,v)$ of $\textnormal{Cl}(R)$ are adjacent if and only if $ef = fe =0$ or $uv = vu=1$. In this manuscript, for a commutative ring $R$, first we obtain the strong resolving graph of $\textnormal{Cl}(R)$ and its independence number. Using them, we determine the strong metric dimension of the clean graph of an arbitrary commutative ring. As an application, we compute the strong metric dimension of $\textnormal{Cl}(R)$, where $R$ is a commutative Artinian ring.

\end{abstract}
 \subjclass[2020]{05C25, 13A99}
\keywords{Clean elements, strong metric dimension, Artinian ring, reduced ring  \\ *  Corresponding author}
\maketitle

\section{Introduction}

The exploration of algebraic structures through the analysis of graphs associated with them has evolved into a compelling and substantive research area over the past few decades. This comprehensive research area not only signifies the intricate interplay between algebraic structures and graph theory but also underscores the escalating significance of this interdisciplinary pursuit within the broader mathematical landscape. Numerous researchers have studied the graph associated with ring structures (cf. \cite{afkhami2012generalized, akbari2009total, anderson2008total, anderson1999zero, ashrafi2010unit, beck1988coloring, behboodi2011annihilating,  maimani2008comaximal}). Units and idempotent elements of a ring play a vital role in the ring theory, attracting authors to study the algebraic graphs defined using these elements such as idempotent graph \cite{akbari2013idempotent}, unit graph \cite{ashrafi2010unit}, idempotent-divisor graph \cite{kimball2018idempotent},  graph with respect to idempotents of a ring \cite{razaghi2021graph}, etc.


The notion of the metric dimension of a graph is defined by Harary and Melter \cite{harary1976metric}. It enables robotics engineers to create a moving robot that can determine its present location in a network of navigating agents. Other than robot navigation, the metric dimension of a graph holds considerable relevance across diverse domains, including but not limited to image processing, combinatorial optimization, chemistry, network security and so on (cf. \cite{khuller1996landmarks,oellermann2007strong,slater1975leaves}). The strong metric dimension, which originates from the metric dimension, was introduced by Seb\H{o} and Tannier \cite{sebHo2004metric} in 2004. They illustrated some applications of the strong metric dimension of a graph in combinatorial searching. Numerous researchers investigated the strong metric dimension for various classes of graphs, see for instance, \cite{kuziak2013strong, kuziak2015strong, rodriguez2014strong}. To gain further insight into the strong metric dimension of a graph, one can refer to \cite{kratica2014strong}. Due to the wide range of applications and noteworthy history of the strong metric dimension of a graph, algebraic graph theorists determine the strong metric dimension of graphs associated with algebraic structures (see \cite{ebrahimi2021strong,ma2018strong,ma2021strong,nikandish2021metric,nikandish2022strong,zhai2023metric}).




An element of a ring is said to be clean if it can be written as the sum of an idempotent element and a unit. A ring is called a \emph{clean ring} if all the elements of the ring are clean. The concept of clean rings was introduced by Nicholson \cite{nicholson1977lifting} in 1977. Many researchers conducted thorough investigations on them (cf. \cite{anderson2002commutative, han2001extensions, nicholson2004clean, nicholson2004rings}). We denote a clean element as $(e,u)$, where $e$ is an idempotent and $u$ is a unit. The clean graph of a ring is defined by Habibi \emph{et al.} \cite{habibi2021clean}. The clean graph $\textnormal{Cl}(R)$ of the ring $R$ is the simple undirected graph whose vertex set is the set of all the clean elements of the ring $R$ and two vertices $(e,u)$, $(f,v)$ are adjacent if and only if $ef = fe =0$ or $uv = vu=1$. The subgraph of $\textnormal{Cl}(R)$ induced by the set $\{ (0,u) \in V(\textnormal{Cl}(R))\}$ is denoted by $\textnormal{Cl}_1(R)$. Note that $\textnormal{Cl}_1(R)$ is a complete graph. The graph $\textnormal{Cl}_2(R)$ is the subgraph of $\textnormal{Cl}(R)$ induced by the set $\{ (e,u) \in V(\textnormal{Cl}(R)) ~ | ~ e \neq 0 \}$. In \cite{habibi2021clean}, the authors studied the relation between the graph-theoretic properties and ring-theoretic properties of the clean graph of a ring. Further, they investigated the clique number, the chromatic number, the domination number and the independence number of the clean graph. The embedding of clean graphs of commutative rings on various surfaces is investigated in \cite{ramanathanaclassification}. In this paper, we study the strong resolving graph and investigate the strong metric dimension of the clean graph of a ring. The paper is arranged as follows. Section \ref{section2} comprises basic definitions and necessary results used throughout the paper. In Section \ref{section3}, we study the strong resolving graph of the clean graph and determine its independence number. Section \ref{section4} comprises the investigation of the strong metric dimension of the clean graph of an arbitrary commutative ring. Moreover, we determine the strong metric dimension of the clean graph of a commutative Artinian ring.

\section{Preliminaries}\label{section2}

A \emph{graph} $\Gamma$ is an ordered pair $(V(\Gamma), E(\Gamma))$, where $V(\Gamma)$ is the set of vertices and $E(\Gamma)$ is the set of edges of $\Gamma$. Two distinct vertices $u, v \in V(\Gamma)$ are $\mathit{adjacent}$ in $\Gamma$, denoted by $u \sim v$, if there is an edge between $u$ and $v$. Otherwise, we write it as $u \nsim v$. Let $\Gamma$ be a graph. By a finite graph $\Gamma$, we mean the vertex set of $\Gamma$ is finite. A graph $\Gamma' = (V(\Gamma'), E(\Gamma'))$ is said to be a \emph{subgraph} of $\Gamma$ if $V(\Gamma') \subseteq V(\Gamma)$ and $E(\Gamma') \subseteq E(\Gamma)$. If $X \subseteq V(\Gamma)$, then the subgraph $\Gamma(X)$ induced by the set $X$ is the graph with vertex set $X$ and two vertices of $\Gamma(X)$ are adjacent if and only if they are adjacent in $\Gamma$.  Let  $\Gamma_1$ and  $\Gamma_2$ be two graphs. The \emph{union} $\Gamma_1 \cup \Gamma_2$ is the graph with $V(\Gamma_1 \cup \Gamma_2) = V(\Gamma_1) \cup V(\Gamma_2)$ and $E(\Gamma_1 \cup \Gamma_2) = E(\Gamma_1) \cup E(\Gamma_2)$. If $V(\Gamma_1) \cap V(\Gamma_2) = \emptyset$, then $\Gamma_1 \cup \Gamma_2 = \Gamma_1 + \Gamma_2$. A \emph{path} in a graph is a sequence of distinct vertices with the property that each vertex in the sequence is adjacent to the next vertex of it. The distance $d(x,y)$ between any two vertices $x$ and $y$ of $\Gamma$ is the number of edges in a shortest path between $x$ and $y$. The \emph{diameter} \rm{diam}$(\Gamma)$ of a connected graph $\Gamma$ is the maximum of the distances between any two vertices in $\Gamma$. A graph $\Gamma$ is said to be \emph{complete} if any two vertices are adjacent in $\Gamma$. The \emph{neighbourhood} $N(x)$ of the vertex $x$ is the set of all the adjacent vertices with $x$. \emph{Vertex cover} of a graph $\Gamma$ is a subset $S$ of vertex set of $\Gamma$ such that every edge of $\Gamma$ has one end in $S$. The smallest cardinality of a vertex cover is called the \emph{vertex covering number} of the graph $\Gamma$ and it is denoted by $\alpha(\Gamma)$. A subset of a vertex set of the graph $\Gamma$ is said to be an \emph{independent set} if no two vertices are adjacent in it. The \emph{independence number} $\beta(\Gamma)$ of the graph $\Gamma$ is the cardinality of a largest independent set of $\Gamma$. In a graph $\Gamma$, a vertex $u$ is \emph{maximally distant} from a vertex $v$ if $d(v,w) \le d(u,v)$ for every $w \in N(u)$. Vertices $u$ and $v$ are said to be \emph{mutually maximally distant} if both $u$ and $v$ are maximally distant from each other. The boundary of a graph $\Gamma$ is defined as

\[ \partial(\Gamma) = \{ u \in V(\Gamma) ~ | ~ \text{there exists $ v \in V(\Gamma)$ such that $u$ and $v$ are mutually maximally distant}  \}. \]

The strong resolving graph $\Gamma_{SR}$ of the graph $\Gamma$ is the graph with the vertex set $\partial(\Gamma)$ and two vertices $u$ and $v$ are adjacent if and only if $u$ and $v$ are mutually maximally distant.

 A vertex $z$ resolves two distinct vertices $x$ and $y$ of $\Gamma$ if $d(x, z) \neq d(y, z)$. A subset $W$ of $V(\Gamma)$ is said to be a \emph{resolving set} of $\Gamma$ if every pair of distinct vertices of $\Gamma$ is resolved by some vertex in $W$. A vertex $w$ in $\Gamma$ strongly resolves two vertices $u$ and $v$ if there exists a shortest path between $u$ and $w$ containing $v$, or there exists a shortest path between $v$ and $w$ containing $u$. A subset $S$ of $V(\Gamma)$ is a \emph{strong resolving set} of $\Gamma$ if every two distinct vertices of $\Gamma$ are strongly resolved by some vertex of $S$. The \emph{strong metric dimension} $\text{sdim}(\Gamma)$ of $\Gamma$  is the smallest cardinality of a strong resolving set in $\Gamma$. The following theorems are important for later use.

 \begin{theorem}{\cite[Theorem 2.1]{oellermann2007strong}}\label{vertexcover}
     For a connected graph $\Gamma, \textnormal{sdim}(\Gamma) = \alpha(\Gamma_{SR})$.
 \end{theorem}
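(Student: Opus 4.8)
The plan is to establish the sharper structural equivalence behind the identity, namely that a set $S \subseteq V(\Gamma)$ is a strong resolving set of $\Gamma$ if and only if $S \cap \partial(\Gamma)$ is a vertex cover of $\Gamma_{SR}$; the equality $\text{sdim}(\Gamma) = \alpha(\Gamma_{SR})$ then follows by minimising cardinalities on both sides. Throughout I take $\Gamma$ to be a finite connected graph, so that all distances and the extremal choices below attain their maxima. Two lemmas carry the argument: (i) if $u$ and $v$ are mutually maximally distant, then no vertex other than $u$ or $v$ can strongly resolve them; and (ii) every pair of distinct vertices lies on a single shortest path whose endpoints are mutually maximally distant.

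For the lower bound $\text{sdim}(\Gamma) \ge \alpha(\Gamma_{SR})$ I would first prove (i). Suppose $u, v$ are mutually maximally distant and some $w \notin \{u, v\}$ strongly resolves them. Then either $v$ lies on a shortest $u$-$w$ path or $u$ lies on a shortest $v$-$w$ path; in the first case, letting $v'$ be the neighbour of $v$ following it toward $w$ gives $d(u, v') = d(u, v) + 1 > d(u, v)$ with $v' \in N(v)$, contradicting that $v$ is maximally distant from $u$, and the second case is symmetric. Granting (i), if $S$ is any strong resolving set and $uv \in E(\Gamma_{SR})$ (so that $u, v$ are mutually maximally distant), then the vertex of $S$ that strongly resolves $u, v$ must be $u$ or $v$; hence $S \cap \partial(\Gamma)$ meets every edge of $\Gamma_{SR}$ and is a vertex cover, giving $\alpha(\Gamma_{SR}) \le |S \cap \partial(\Gamma)| \le |S|$. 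Minimising over all strong resolving sets $S$ yields the inequality.

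For the upper bound $\text{sdim}(\Gamma) \le \alpha(\Gamma_{SR})$ the heart of the matter is lemma (ii), which I would prove by an extension-and-maximality argument. Fix distinct $x, y$ and consider the family of shortest paths $Q$ (between their own endpoints) that contain $x$ and $y$ in the order $u, \dots, x, \dots, y, \dots, v$; this family is nonempty, since a shortest $x$-$y$ path belongs to it, and by finiteness I choose $Q$ of maximum length with endpoints $u, v$. If $u$ were not maximally distant from $v$, a neighbour $u'$ with $d(u', v) = d(u, v) + 1$ would prepend to $Q$ a strictly longer shortest $u'$-$v$ path still containing $x$ and $y$, contradicting maximality; the symmetric claim holds at $v$, so $u$ and $v$ are mutually maximally distant. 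A short triangle-inequality computation from $d(u, v) = d(u, x) + d(x, y) + d(y, v)$ then shows that $x$ lies on a shortest $u$-$y$ path and $y$ lies on a shortest $v$-$x$ path, so both $u$ and $v$ strongly resolve $x, y$. Now take a minimum vertex cover $C$ of $\Gamma_{SR}$; for any distinct $x, y$ the edge $uv \in E(\Gamma_{SR})$ produced by (ii) is met by $C$, and whichever endpoint lies in $C$ strongly resolves $x, y$. Thus $C$ is a strong resolving set and $\text{sdim}(\Gamma) \le |C| = \alpha(\Gamma_{SR})$, completing the equality.

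The main obstacle I anticipate is lemma (ii): one must simultaneously extend the shortest $x$-$y$ path past both ends to a mutually maximally distant pair while keeping $x$ and $y$ on a single shortest path, and verify that the extremal path stays simple — the new neighbour $u'$ cannot already lie on $Q$, since a vertex on $Q$ adjacent to the endpoint $u$ would be \emph{closer} to $v$ rather than farther. The reduction to vertex covers and the counting are then routine; the only caveat is the finiteness assumption, which is needed precisely so that the maximum-length path in (ii) and the farthest neighbours in (i) exist.
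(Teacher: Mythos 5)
The paper does not prove this statement at all: it is quoted verbatim, with citation, from Oellermann and Peters-Fransen \cite{oellermann2007strong}, so there is no in-paper argument to compare against. Your proof is correct and reconstructs the standard argument behind that cited result: the two lemmas you isolate --- that a mutually maximally distant pair can be strongly resolved only by one of its own members, and that every pair of distinct vertices lies on a shortest path whose endpoints are mutually maximally distant --- are exactly the right reduction to vertex covers of $\Gamma_{SR}$, and your treatment of the simplicity of the extended path and of the finiteness hypothesis is sound.
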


 \begin{theorem}\label{Gallaithoerem}
     For any graph $\Gamma$ of order $n$, $\alpha(\Gamma) + \beta(\Gamma) = n$.
 \end{theorem}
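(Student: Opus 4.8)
The plan is to exploit the elementary but fundamental duality between vertex covers and independent sets, which is exactly the content of Gallai's identity. The single observation driving everything is the following: a subset $S \subseteq V(\Gamma)$ is a vertex cover if and only if its complement $V(\Gamma) \setminus S$ is an independent set. First I would prove this equivalence directly from the definitions given in Section \ref{section2}. If $S$ is a vertex cover, then every edge has an endpoint in $S$; were two vertices $u, v \in V(\Gamma) \setminus S$ adjacent, the edge $uv$ would have both endpoints outside $S$, contradicting that $S$ is a cover, so $V(\Gamma) \setminus S$ is independent. Conversely, if $V(\Gamma) \setminus S$ is independent, no edge can have both endpoints in $V(\Gamma) \setminus S$, hence every edge meets $S$, so $S$ is a vertex cover.

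With this equivalence in hand, the theorem splits into two inequalities. For the inequality $\alpha(\Gamma) + \beta(\Gamma) \le n$, I would take a maximum independent set $I$, so that $|I| = \beta(\Gamma)$. By the equivalence, $V(\Gamma) \setminus I$ is a vertex cover, and since $\alpha(\Gamma)$ is the \emph{smallest} cardinality of a vertex cover, we obtain $\alpha(\Gamma) \le |V(\Gamma) \setminus I| = n - \beta(\Gamma)$, which rearranges to $\alpha(\Gamma) + \beta(\Gamma) \le n$.

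For the reverse inequality $\alpha(\Gamma) + \beta(\Gamma) \ge n$, I would symmetrically take a minimum vertex cover $S$, so that $|S| = \alpha(\Gamma)$. Again by the equivalence, $V(\Gamma) \setminus S$ is an independent set, and since $\beta(\Gamma)$ is the \emph{largest} cardinality of an independent set, we get $\beta(\Gamma) \ge |V(\Gamma) \setminus S| = n - \alpha(\Gamma)$, which rearranges to $\alpha(\Gamma) + \beta(\Gamma) \ge n$. Combining the two inequalities yields $\alpha(\Gamma) + \beta(\Gamma) = n$, completing the argument.

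This is a classical result, so I do not anticipate a genuine obstacle; the only point requiring care is the equivalence between vertex covers and independent sets, and in particular being precise about the extremal direction of each optimization (minimum cover versus maximum independent set) so that the two inequalities point the correct way. Everything else is a short complementation argument requiring no case analysis and no structural assumptions on $\Gamma$, so the proof goes through verbatim for the strong resolving graph $\Gamma_{SR}$ to which Theorem \ref{vertexcover} will later apply it.
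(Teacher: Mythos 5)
Your proof is correct: the complementation equivalence (a set is a vertex cover if and only if its complement is independent) together with the two extremal inequalities is precisely the classical argument for Gallai's identity. The paper states this theorem without proof as a known result, so there is nothing to compare against; your argument is the standard one and fills that gap correctly.
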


Throughout the paper, the ring $R$ is a commutative ring with unity. For basic definitions of ring theory, we refer the reader to  \cite{atiyah1969introduction}. A ring $R$ is said to be \emph{local} if it has a unique maximal ideal $\mathcal{M}$. For the ring $R$, $U(R)$ denotes the set of all unit elements of the ring $R$. Moreover, $U'(R) = \{ u \in U(R) ~ | ~ u^2 =1 \}$ and $U''(R) = U(R) \setminus U'(R)$. An element $e$ of the ring $R$ is said to be an idempotent element if $e^2 = e$. By $\textnormal{Id}(R)$, we mean the set of all idempotent elements of the ring $R$. Two idempotent elements $e$ and $f$ are said to be orthogonal if $e f =0$. Set of orthogonal elements of $R$ which has maximum cardinality is denoted by $\textnormal{Id}_{\perp}(R)$. Also, $\textnormal{Id}(R)^* = \textnormal{Id}(R) \setminus \{0,1 \}$ and $\textnormal{Id}_{\perp}(R)^* = \textnormal{Id}_{\perp}(R) \setminus \{0 \}$. We use $\textnormal{Max}(R)$ for the set of all the maximal ideals of the ring $R$. The following propositions are easy to prove.

\begin{proposition}\label{twononselfinvertible}
    Let $(R, +, \cdot)$ be a commutative ring with unity. Then $|U''(R)| =2$ if and only if either $(U(R), \cdot) \cong (\mathbb{Z}_3, +)$ or $(U(R), \cdot) \cong (\mathbb{Z}_4, +)$.
\end{proposition}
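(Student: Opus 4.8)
The plan is to work entirely inside the abelian group $(U(R),\cdot)$ and to exploit the fact that $U'(R)$ is exactly its $2$-torsion. First I would record that $U'(R) = \{u \in U(R) : u^2 = 1\}$ is a subgroup of $U(R)$: since $R$ is commutative, $U(R)$ is abelian, so this set is closed under products and inverses, and it is an elementary abelian $2$-group because every non-identity element has order $2$. Writing $T = U'(R)$, we obtain a disjoint decomposition $U(R) = T \sqcup U''(R)$, so the hypothesis $|U''(R)| = 2$ is equivalent to $|U(R)| = |T| + 2$. (As a sanity check, $u \in U''(R)$ forces $u^{-1} \in U''(R)$ with $u \neq u^{-1}$, so $U''(R)$ splits into inverse pairs and $|U''(R)|$ is always even.)

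The key step for the forward direction is to bound $|T|$, and I would do this without any a priori finiteness assumption on $U(R)$. Fix any $u \in U''(R)$, which exists since $|U''(R)| = 2 > 0$. For each $t \in T$ one computes $(tu)^2 = t^2 u^2 = u^2 \neq 1$, so $tu \in U''(R)$; hence $t \mapsto tu$ is a well-defined map $T \to U''(R)$, and it is injective because multiplication by $u$ is a bijection of $U(R)$. Therefore $|T| \le |U''(R)| = 2$, and consequently $|U(R)| = |T| + 2 \in \{3,4\}$.

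It then remains to classify the two cases. If $|U(R)| = 3$ the group is cyclic, so $U(R) \cong \mathbb{Z}_3$. If $|U(R)| = 4$, then $|T| = 2$ and $U(R)$ is an abelian group of order $4$, hence either $\mathbb{Z}_4$ or $\mathbb{Z}_2 \times \mathbb{Z}_2$; the latter is excluded because in it every element satisfies $u^2 = 1$, forcing $T = U(R)$ and $|T| = 4$, contradicting $|T| = 2$. Thus $U(R) \cong \mathbb{Z}_4$. For the converse I would simply compute $U''(R)$ in each case: in a cyclic group of order $3$ the only solution of $u^2 = 1$ is the identity, leaving the two generators in $U''(R)$; and in a cyclic group $\langle g \rangle$ of order $4$ the equation $u^2 = 1$ has exactly the solutions $\{1, g^2\}$, again leaving the two generators in $U''(R)$. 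Both give $|U''(R)| = 2$.

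I expect no serious obstacle here, as the argument is elementary group theory once $U'(R)$ is recognized as the $2$-torsion subgroup. The only points requiring care are avoiding a finiteness hypothesis on $U(R)$ — which the injection $T \hookrightarrow U''(R)$ resolves cleanly — and remembering to rule out the Klein four-group in the order-$4$ case.
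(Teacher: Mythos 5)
Your proof is correct. The paper in fact offers no proof of this proposition at all --- it is dismissed with ``the following propositions are easy to prove'' --- so there is nothing to compare against, and your argument supplies the missing details cleanly. The two essential points are exactly the ones you isolate: recognizing $T = U'(R)$ as the $2$-torsion subgroup of the abelian group $(U(R),\cdot)$, and the injection $T \hookrightarrow U''(R)$, $t \mapsto tu$ for a fixed $u \in U''(R)$, which yields $|T| \le 2$ and hence $|U(R)| = |T| + 2 \in \{3,4\}$ without assuming $U(R)$ finite. The elimination of the Klein four-group in the order-$4$ case and the two converse computations are routine and correctly executed. No gaps.
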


\begin{proposition}\label{selfinvertibleunits}
Let $(R, +, \cdot)$ be any finite commutative ring with unity. Then $U''(R) = \emptyset$ if and only if either $\left(U(R), \cdot \right) \cong \mathbb{Z}_1$ or $\left(U(R), \cdot \right) \cong \left( \mathbb{Z}_2 \times \mathbb{Z}_2 \times \cdots \times \mathbb{Z}_2, + \right)$. 
 \end{proposition}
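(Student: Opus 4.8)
The plan is to unwind the definition of $U''(R)$ into a statement about the exponent of the unit group and then invoke the structure theory of finite abelian groups. First I would observe that, directly from the definitions, $U''(R) = U(R) \setminus U'(R) = \emptyset$ if and only if $U(R) = U'(R)$, that is, if and only if every unit $u \in U(R)$ satisfies $u^2 = 1$. Since $R$ is a finite commutative ring with unity, $(U(R), \cdot)$ is a \emph{finite abelian group}, so the condition ``$u^2 = 1$ for all $u \in U(R)$'' is precisely the statement that $(U(R), \cdot)$ is a finite abelian group of exponent dividing $2$.

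For the forward direction I would appeal to the fundamental theorem of finite abelian groups: a finite abelian group in which every element satisfies $u^2 = 1$ is an elementary abelian $2$-group, hence isomorphic to a direct product of copies of $\mathbb{Z}_2$. Writing this product as having $n \ge 0$ factors, the case $n = 0$ gives the trivial group, which is the alternative $(U(R), \cdot) \cong \mathbb{Z}_1$, while each case $n \ge 1$ yields $(U(R), \cdot) \cong (\mathbb{Z}_2 \times \mathbb{Z}_2 \times \cdots \times \mathbb{Z}_2, +)$, matching the statement. The converse is immediate: in the trivial group the sole element is the identity, and in a product of copies of $\mathbb{Z}_2$ every element has order dividing $2$; transporting this back along the isomorphism shows that every unit satisfies $u^2 = 1$, whence $U'(R) = U(R)$ and therefore $U''(R) = \emptyset$.

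Since the whole equivalence reduces to the elementary classification of finite abelian groups of exponent $2$, I do not anticipate any genuine obstacle. The only point that warrants a little care is the bookkeeping of the two cases $n = 0$ and $n \ge 1$, so that both the trivial group $\mathbb{Z}_1$ and the nontrivial products $\mathbb{Z}_2 \times \cdots \times \mathbb{Z}_2$ explicitly appear and the two alternatives in the proposition are both accounted for.
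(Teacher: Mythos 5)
Your proof is correct: reducing $U''(R)=\emptyset$ to the condition that every unit satisfies $u^2=1$ and then invoking the classification of finite abelian groups of exponent dividing $2$ is exactly the standard argument. The paper itself omits the proof entirely (it states that the proposition is ``easy to prove''), and your write-up supplies precisely the routine reasoning it implicitly relies on, including the correct handling of the trivial case $(U(R),\cdot)\cong\mathbb{Z}_1$.
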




Local rings up to the order $8$ are listed in the following table.  
 \begin{center}
\begin{tabular}{|c| c|}
\hline
 $ |\textbf{R}|$ & $\textbf{R}$ \\
 \hline
$2$   & $\mathbb{Z}_2$ \\
\hline
 $ 3$  & $\mathbb{Z}_3$ \\
\hline
$4$  & $\mathbb{F}_4$, $\mathbb{Z}_4$, $\dfrac{\mathbb{Z}_2[x]}{\langle x^2 \rangle}$ \\
\hline
$5$  & $\mathbb{Z}_5$ \\
\hline
$ 7$  & $\mathbb{Z}_7$ \\
\hline
$ 8$  & $\mathbb{F}_8$, $\mathbb{Z}_8$, $\dfrac{\mathbb{Z}_2[x]}{\langle x^3 \rangle}$, $\dfrac{\mathbb{Z}_2[x,y]}{\langle x^2, xy, y^2 \rangle}$, $\dfrac{\mathbb{Z}_4[x]}{\langle 2x, x^2 \rangle}$, $\dfrac{\mathbb{Z}_4[x]}{\langle 2x, x^2-2 \rangle}$ \\
\hline
\end{tabular}
\captionof{table}{Local rings up to the order $8$}
\label{localrings}
\end{center}

\section{Strong Resolving Graph of Clean Graph}\label{section3}

In this section, we obtain the strong resolving graphs of the clean graphs $\textnormal{Cl}(R)$ and $\textnormal{Cl}_2(R)$, respectively. Note that if the ring $R$ has no non-trivial idempotents, then the graph $\textnormal{Cl}_2(R)$ is disconnected. Consequently, we investigate the strong resolving graph of the graph $\textnormal{Cl}_2(R)$, where $R$ is a ring containing non-trivial idempotent elements. To study the strong resolving graph, first we identify the mutually maximally distant vertices of the graphs $\textnormal{Cl}(R)$ and $\textnormal{Cl}_2(R)$, respectively. We begin with the following theorem. 



\begin{theorem}\label{mmdclr}
    Let $R$ be a ring with no non-trivial idempotents. Then the following holds for the graph $\textnormal{Cl}(R)$.

    \begin{enumerate}[\rm(i)]
        \item If $|U(R)|=1$, then $\textnormal{Cl}(R) \cong K_2$.
        \item If $|U(R)| \ge 2$, then two vertices $(e,u)$ and $(f, v)$ are mutually maximally distant if and only if either $e=f=0$ or $e=f=1$.
    \end{enumerate}
\end{theorem}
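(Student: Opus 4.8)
The plan is to exploit commutativity to collapse the adjacency rule and then read the distance structure off directly. Since $R$ is commutative the conditions $ef=fe$ and $uv=vu$ are automatic, so $(e,u)\sim(f,v)$ precisely when $ef=0$ or $uv=1$. As $R$ has no non-trivial idempotents every vertex has first coordinate $0$ or $1$, and I would partition $V(\textnormal{Cl}(R))$ into $A=\{(0,u):u\in U(R)\}$ and $B=\{(1,u):u\in U(R)\}$. The decisive structural observation is that every vertex of $A$ is \emph{universal}: since $0\cdot f=0$ for every $f$, the vertex $(0,u)$ is adjacent to all other vertices. Inside $B$, by contrast, $(1,u)\sim(1,v)$ forces $uv=1$, so the induced subgraph on $B$ is exactly the matching pairing $(1,u)$ with $(1,u^{-1})$ for $u\in U''(R)$, together with isolated vertices $(1,u)$ for $u\in U'(R)$. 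Part (i) is then immediate: when $|U(R)|=1$ the only vertices are $(0,1)$ and $(1,1)$, and $0\cdot1=0$ makes them adjacent, so $\textnormal{Cl}(R)\cong K_2$.

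For part (ii) I would first record that $\operatorname{diam}(\textnormal{Cl}(R))=2$: every pair is joined through a universal vertex of $A$, while two non-adjacent vertices of $B$ (which exist once $|U(R)|\ge2$, e.g.\ $(1,1)$ and any $(1,u)$ with $u\neq1$) lie at distance exactly $2$. With diameter $2$ the distances take only the values $0,1,2$, so any pair at distance $2$ is automatically mutually maximally distant. The engine of the argument is the interaction of the MMD condition with universal vertices: if $w$ is universal then every vertex $v$ is maximally distant from $w$ (each neighbour $z$ of $v$ satisfies $d(w,z)\le1=d(v,w)$ because $w$ dominates), whereas $w$ is maximally distant from $v$ \emph{only if} $v$ is adjacent to every vertex except possibly $w$.

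I would then dispatch the three cases. Two vertices of $A$ are both universal, so each is maximally distant from the other and the pair is MMD, giving $e=f=0$. For two vertices of $B$: an unmatched pair sits at distance $2$ and is therefore MMD, while a matched pair $(1,u),(1,u^{-1})$ is at distance $1$ with $N((1,u))=A\cup\{(1,u^{-1})\}$; a direct check shows every such neighbour is within distance $1$ of $(1,u^{-1})$, so this pair is also MMD, giving $e=f=1$. This handles the characterization on the same-type pairs.

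The main obstacle, and the only place the hypothesis $|U(R)|\ge2$ is genuinely used, is ruling out mixed pairs. Taking $a=(0,u)\in A$ and $b=(1,v)\in B$, they are adjacent, so $d(a,b)=1$; although $b$ is maximally distant from the universal vertex $a$, the reverse fails. Indeed, for $a$ to be maximally distant from $b$ one would need $b$ adjacent to every vertex other than $a$, but $b$ is non-adjacent to every vertex of $B$ except its possible match. Since $|U(R)|\ge2$ always supplies a vertex $b''\in B$ distinct from $b$ and from its match, one has $b''\in N(a)$ with $d(b,b'')=2>1=d(a,b)$, so $a$ is not maximally distant from $b$ and the pair is not MMD. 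The one point requiring care is verifying that such a $b''$ exists in every case: for $|U(R)|\ge3$ it is clear by counting, while for $|U(R)|=2$ one uses that the unit group is $\mathbb{Z}_2$, forcing $U''(R)=\varnothing$, so both vertices of $B$ are unmatched and hence mutually non-adjacent.
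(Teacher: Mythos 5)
Your proposal is correct and follows essentially the same route as the paper: both rest on the observations that every vertex $(0,u)$ is universal, that $\operatorname{diam}(\textnormal{Cl}(R))=2$ so distance-$2$ pairs are automatically mutually maximally distant, and that a mixed pair fails because one can exhibit a vertex $(1,w)$ adjacent to the universal vertex but at distance $2$ from $(1,v)$ (your handling of the $|U(R)|=2$ case via $U''(R)=\varnothing$ matches the paper's witness choice). The only difference is presentational: you organize the argument around the matching structure on $B$ rather than the paper's case-by-case contradiction, but the substance is identical.
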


\begin{proof}
   Let $R$ be a ring with no non-trivial idempotents. If $|U(R)| =1$, then $V(\textnormal{Cl}(R)) = \{ (0,1), (1,1)\}$ and so $\textnormal{Cl}(R) \cong K_2$. Now suppose that $|U(R)| \ge 2$. Let $(e,u)$ and $(f, v)$ be two distinct vertices of $\textnormal{Cl}(R)$. Assume that $(e,u)$ and $(f, v)$ are mutually maximally distant vertices. Let $e=0$. It follows that $d((e,u), x) =1$ for all $x \in V(\textnormal{Cl}(R))$ and so $d((e,u), (f,v)) =1$. Since $(e,u)$ and $(f, v)$ are mutually maximally distant, $d((f,v), y) \le d((e,u), (f,v))$ for every $y \in N((e,u))$. It implies that $d((f,v), y) =1$ for all $y \in N((e,u))$. Now suppose that $f=1$. Then choose $y = (1,w) \in N((e,u))$ such that if $v=1$, then $w \in U(R) \setminus \{ 1\}$, and if $v \neq 1$, then $w =1$. Consequently, $d((f,v), y)  \neq 1$, a contradiction. It implies that $f=0$.


   Next, let $e=1$. Now suppose that $f=0$. Then $d((e,u), (f,v)) =1$. Choose $(1,r) \in V(\textnormal{Cl}(R))$ such that if $u=1$, then $r \in U(R) \setminus \{ 1\}$ and if $u \neq 1$, then $r =1$. Then note that $(1,r) \in N((f,v))$. Also, $d((e,u), (1,r)) =2$, which is a contradiction to the fact that $(e,u)$ and $(f, v)$ are mutually maximally distant. Consequently, $f=1$.
   
   Conversely, suppose that either $e=f=0$ or $e=f=1$. We show that $(e,u)$, $(f, v)$ are mutually maximally distant vertices. First assume that $e=f=0$. Then $d((e,u), (f,v)) =1$. Moreover, $d((e,u), x) =1$ and $d((f,v), x) = 1$ for all $x \in V(\textnormal{Cl}(R))$. Consequently, $(e,u)$, $(f, v)$ are mutually maximally distant. Next, let $e=f=1$. Suppose that $u v \neq 1$. Then $(e,u) \nsim (f, v)$. Since $\textnormal{diam(\textnormal{Cl}(R))} = 2$, we obtain $d((e,u), (f,v)) =2$. Also, $d((e,u), x) \le 2$ and $d((f,v), x) \le 2$ for all $x \in V(\textnormal{Cl}(R))$. Thus, $(e,u)$, $(f, v)$ are mutually maximally distant. Now, let $e=f=1$ and $u v =1$. It follows that $d((e,u), (f,v)) =1$. Assume that $(g,w) \in N((e,u)) \setminus \{ (f,v) \}$. Then $g = 0$ and so $d((g,w), (f,v)) =1$. Similarly, $d((e,u), z) =1$ for all $ z \in N((f,v))$. Thus, $(e,u)$, $(f, v)$ are mutually maximally distant.
\end{proof}

\begin{theorem}\label{mmdcl2r}
    Let $R$ be a ring with some non-trivial idempotents. Then the following holds for the graph $\textnormal{Cl}_2(R)$.
    \begin{enumerate}[\rm(i)]
        \item If $|U(R)|=1$, then $\textnormal{Cl}_2(R)$ is a complete graph.
        \item If $|U(R)| \ge 2$, then two vertices $(e,u)$ and $(f, v)$ are mutually maximally distant if and only if one of the following holds
         \begin{enumerate}[\rm(a)]
         \item $e=f=1$  and $u v \neq 1$.
         \item $e,f \neq 1$, $e f \neq 0$ and $u v \neq 1$.
         \item $e =1$, $f \neq 1$ and $u =v \in U''(R)$.
         \end{enumerate}
    \end{enumerate}
\end{theorem}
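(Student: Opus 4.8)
The plan is to first determine the distance function of $\textnormal{Cl}_2(R)$ and then read the mutually maximally distant pairs off it. Part (i) is immediate: if $U(R)=\{1\}$ then every vertex is $(e,1)$ with $e\neq 0$, and any two satisfy $uv=1\cdot 1=1$, so they are adjacent and $\textnormal{Cl}_2(R)$ is complete. For (ii) I call a vertex $(e,u)$ of \emph{type $A$} if $e=1$ and of \emph{type $B$} if $e\neq 1$ (recall $e\neq 0$ throughout), and I would record three structural facts. Since $e\cdot 1=e\neq 0$, a vertex $(e,u)$ is adjacent to a type-$A$ vertex $(1,t)$ only when $ut=1$, so \emph{every vertex has a unique type-$A$ neighbour $(1,u^{-1})$}. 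If $e\neq 1$ then $1-e$ is a nonzero idempotent orthogonal to $e$, so for any $(f,v)$ the vertex $(1-e,v^{-1})$ is adjacent to $(e,u)$ by orthogonality and to $(f,v)$ by units; hence \emph{a type-$B$ vertex lies within distance $2$ of every vertex}. Finally, two distinct type-$A$ vertices $(1,u),(1,v)$ are adjacent iff $uv=1$ and otherwise have disjoint neighbourhoods, giving $d((1,u),(1,v))=3$ via $(1,u)-(g,u^{-1})-(1-g,v^{-1})-(1,v)$ for a nontrivial idempotent $g$. Thus $\mathrm{diam}(\textnormal{Cl}_2(R))=3$, and \emph{the only pairs at distance $3$ are the non-adjacent type-$A$ pairs}.

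These facts collapse the maximally-distant test to a single check. Because any neighbour of type $B$ is at distance $\le 2$ from a fixed vertex, the only neighbour of $x=(e,u)$ that can be strictly farther from $y$ than $x$ is the type-$A$ neighbour $(1,u^{-1})$, and this is relevant only when $y$ is type $A$. So $x$ is automatically maximally distant from any type-$B$ vertex, and $x$ is maximally distant from a type-$A$ vertex $y=(1,s)$ iff $d((1,u^{-1}),(1,s))\le d(x,y)$. For the backward direction I would verify each case against this. In (a) the pair is type $A$ with $uv\neq 1$, hence diametral at distance $3$, and every diametral pair is mutually maximally distant. In (b) both idempotents differ from $1$, so each side is within distance $2$ of everything, and $ef\neq 0$, $uv\neq 1$ make the pair non-adjacent at distance $2$; both maximal-distance conditions hold automatically. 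In (c), with $e=1$, $f\neq 1$, $u=v\in U''(R)$, the pair is non-adjacent ($ef=f\neq 0$, $uv=u^2\neq 1$) at distance $2$; the one side is automatic since $(f,u)$ is type $B$, and on the other side the unique type-$A$ neighbour $(1,u^{-1})$ of $(f,u)$ satisfies $u\cdot u^{-1}=1$, so it is adjacent to $(1,u)$ and not farther than $2$.

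The forward direction carries the real content, and I would split it by how many of $e,f$ equal $1$. If both equal $1$ the pair is type $A$; were it adjacent ($uv=1$, forcing $v=u^{-1}\neq u$ and $u^2\neq 1$), then a neighbour $(g,u^{-1})$ of $(1,u)$ with $g\neq 0,1$ sits at distance $2$ from $(1,u^{-1})$ while $d((1,u),(1,u^{-1}))=1$, so the pair is not mutually maximally distant; hence an MMD type-$A$ pair has $uv\neq 1$, which is (a). If neither equals $1$ I must exclude adjacency: when $u\neq v$ the type-$A$ neighbour $(1,u^{-1})$ of $(e,u)$ is non-adjacent to $(f,v)$, hence at distance $2>1$; when $u=v$ the inequality $e\neq f$ forces one of $e-ef$, $f-ef$ to be nonzero, yielding an orthogonality neighbour at distance $2$ on the corresponding side. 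Either way adjacency is ruled out, so an MMD type-$B$ pair is non-adjacent, which is (b). If exactly one equals $1$, say $e=1$, $f\neq 1$, adjacency ($uv=1$) is again excluded by exhibiting a distance-$2$ neighbour, and in the non-adjacent case the only possible obstruction is the unique type-$A$ neighbour $(1,v^{-1})$ of $(f,v)$, which lies at distance $3$ from $(1,u)$ exactly when $u\neq v$; MMD therefore forces $u=v$, and non-adjacency then forces $u^2\neq 1$, i.e.\ $u\in U''(R)$, which is (c).

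The step I expect to be the main obstacle is the exclusion of adjacent pairs in the forward direction, particularly the type-$B$/type-$B$ case with $u=v$, where the unit-inverse trick is unavailable and one must produce a nonzero idempotent orthogonal to one of $e,f$ but not the other; the clean fact that makes this work is that $e\neq f$ guarantees $e-ef\neq 0$ or $f-ef\neq 0$ (since both vanishing would give $e=ef=f$). A closely related nuisance is the bookkeeping for small unit groups: throughout I need a unit $t$ with $tu\neq 1$ that also yields a vertex distinct from the given pair, which I would secure by choosing $t\notin\{u,u^{-1}\}$, noting that when $|U(R)|=2$ the group is $\{1,s\}$ with $s^2=1$, so $u^{-1}=u$ and the non-identity element $s$ provides such a $t$.
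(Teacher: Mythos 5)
Your proposal is correct and follows essentially the same route as the paper: the same case split on which of $e,f$ equal $1$, the same witnesses (the unit-inverse neighbour $(1,u^{-1})$ and orthogonality neighbours $(1-f,w)$ with $wu\neq 1$), and the same key facts that vertices with $e\neq 1$ are within distance $2$ of everything while non-adjacent pairs $(1,u),(1,v)$ are diametral. Your up-front ``collapse'' of the maximal-distance test to the single type-$A$ neighbour is a cleaner packaging of what the paper verifies case by case, not a genuinely different argument.
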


\begin{proof}
    Let $R$ be a commutative ring with some non-trivial idempotents. If $|U(R)| =1$, then $V(\textnormal{Cl}_2(R)) = \{ (e,1) ~ | ~ e \in \textnormal{Id}(R) \setminus \{0 \} \}$ and so $\textnormal{Cl}_2(R) \cong K_{|\textnormal{Id}(R)|-1}$. Now suppose that $|U(R)| \ge 2$. Let $(e,u)$, $(f, v)$ be two distinct vertices of $\textnormal{Cl}_2(R)$. Assume that $(e,u)$, $(f, v)$ are mutually maximally distant. Now, we prove the result in the following cases.

    \noindent\textbf{Case 1.} $e=f=1$. In this case, we show that $u v \neq 1$. On the contrary, suppose that $u v = 1$. Then $d((e,u), (f,v)) = 1$. Consider the vertex $(f',u)$ such that $ f' \in \textnormal{Id}(R) \setminus \{0,1 \} $. Then note that $(f',u) \in N((f,v))$. Also, $d((e,u), (f',u)) \ge 2$. It follows that $(e,u)$ and $(f,v)$ are not mutually maximally distant, a contradiction. Consequently, $u v \neq 1$.

    \noindent\textbf{Case 2.} $e,f \neq 1$. First, suppose that $e f = 0$. It follows that $d((e,u), (f,v)) = 1$. Consider $(f,w) \in V(\textnormal{Cl}_2(R))$ such that $w v \neq 1$. Then note that $(f,w) \in N((e,u))$. Also, $d((f,v), (f,w)) \ge 2$. It implies that $(e,u)$ and $(f,v)$ are not mutually maximally distant, a contradiction. Thus, $e f \neq 0$.
    
    Next, suppose that $u v=1$. It follows that $d((e,u), (f,v)) = 1$. If $e =f$, then consider the vertex $(1, v) \in V(\textnormal{Cl}_2(R))$. Then note that $(1,v) \in N((e,u))$ and $d((f,v), (1,v)) \ge 2$. It follows that $(e,u)$ and $(f,v)$ are not mutually maximally distant, a contradiction. Consequently, $u v \neq 1$. Now let $e \neq f$. If $(1-e)f \neq 0$, then consider $(1-e, w) \in N((e,u))$ such that $vw \neq 1$. Then note that $d((f,v), (1-e,w)) \ge 2$, a contradiction. If $(1-e)f = 0$, then consider $(1-f, w') \in N((f,v))$ such that $uw' \neq 1$. Observe that $(1-f)e  \neq 0$ and so $d((e,u), (1-f,w')) \ge 2$, which is a contradiction to the fact that $(e,u)$ and $(f,v)$ are mutually maximally distant. It concludes that $u v \neq 1$.

    \noindent\textbf{Case 3.} $e =1$, $f \neq 1$. Suppose that $uv =1$. Then $d((e,u), (f,v)) = 1$. Consider the vertex $(1-f, w) \in N((f,v))$ such that $uw \neq 1$. Note that, $d((e,u), (1-f, w)) \ge 2$. It follows that $(e,u)$ and $(f,v)$ are not mutually maximally distant, a contradiction. Consequently, $u v \neq 1$.
    
    Now suppose that $u \neq v$. Then $(e,u) \sim (1-f, u^{-1}) \sim (f,v)$ and so $d((e,u), (f,v)) = 2$. Consider the vertex $(1, v^{-1}) \in N((f,v))$. Notice that $d((e,u), (1, v^{-1}) ) \ge 2$. Assume that $d((e,u), (1, v^{-1}) ) = 2$. It follows that there exist $(g, w') \in V(\textnormal{Cl}_2(R))$ such that $(e,u) \sim (g, w') \sim (1, v^{-1})$. Since $g 1 \neq 1$, we have $u w' = v^{-1} w' =1$, which is not possible. Therefore, $d((e,u), (1, v^{-1}) ) = 3$, which is a contradiction to the fact that the vertices $(e,u)$ and $(f,v)$ are mutually maximally distant. Therefore, $u = v $. Also, $u v \neq 1$ implies that $u,v \notin U'(R)$. Thus, $u = v \in U''(R) $.

   Conversely, first let $(e,u)$, $(f, v) \in V(\textnormal{Cl}_2(R))$ such that $e=f=1$ and $u v \neq 1$.  It implies that $d((e,u), (f,v)) \ge 2$.  Assume that $d((e,u), (f,v)) = 2$. Then there exists $(s, \delta) \in V(\textnormal{Cl}_2(R))$ such that $(e,u) \sim (s, \delta) \sim (f, v)$. Since $1s  \neq 0$, we have $u \delta = 1$ and $v \delta = 1$, which is not possible. Hence,  $d((e,u), (f,v)) = \textnormal{diam}(\textnormal{Cl}_2(R)) = 3$. Moreover, $(e, u) \sim (h, u^{-1}) \sim (1-h, v^{-1}) \sim (f,v)$, where $h$ is a non-trivial idempotent of $R$. Thus, the vertices $(e,u)$ and $(f, v)$ are mutually maximally distant.  

   Now let $e,f \neq 1$, $e f \neq 0$ and $u v \neq 1$. Clearly, $d((e,u), (f,v)) \ge 2$. First, we show that if $e \neq 1$, then $d((e,u), x) \le 2$ for all $x \in V(\textnormal{Cl}_2(R))$. Let $x = (h,w) \in V(\textnormal{Cl}_2(R))$. If $(e,u) \nsim (h,w)$, then $(e,u) \sim (1-e, w^{-1}) \sim (h,w)$. Therefore, $d((e,u), x) \le 2$ for all $x \in V(\textnormal{Cl}_2(R))$. Since $f \neq 1$, we have   $d((f,v), y) \le 2$ for all $y \in V(\textnormal{Cl}_2(R))$. Thus, $d((e,u), (f,v)) = 2$ and so $(e,u)$ and $(f, v)$ are mutually maximally distant vertices. 
   
   Finally, let $e =1$, $f \neq 1$ and $u =v \in U''(R)$. Then note that $d((e,u), (f,v)) \ge 2$. Similarly, if $f \neq 1$, notice that $d((e,u), (f,v)) = 2$. Also, $d((f,v), x) \le 2$ for all $x \in V(\textnormal{Cl}_2(R))$. Assume that $(h', w') \in N((f,v))$. If $h' =1$, then $w' v =1$ and so $d((e,u), (h', w')) = 1$. Now if $h' \neq 1$, then one can observe that $d((e,u), (h', w')) \le 2$. Thus, the vertices $(e,u)$ and $(f, v)$ are mutually maximally distant.

   This completes our proof.
\end{proof}

Note that $\textnormal{Cl}(R) = \textnormal{Cl}_1(R) \vee \textnormal{Cl}_2(R)$, where $\textnormal{Cl}_1(R)$ is a complete subgraph of $\textnormal{Cl}(R)$. Thus, if $d(x,y) \in \{ 2,3 \}$ in $\textnormal{Cl}_2(R)$, then  $d(x,y) =2$ in $\textnormal{Cl}(R)$.

\begin{theorem}\label{mmdclrwithid}
    Let $R$ be a ring with some non-trivial idempotents. Then the following holds for the graph $\textnormal{Cl}(R)$.

    \begin{enumerate}[\rm(i)]
        \item If $|U(R)|=1$, then $\textnormal{Cl}(R)$ is a complete graph.
        \item If $|U(R)| \ge 2$, then two vertices $(e,u)$ and $(f, v)$ are mutually maximally distant if and only if either $e=f=0$ or $e,f \neq 0$, $e f \neq 0$ and $u v \neq 1$.
    \end{enumerate}
\end{theorem}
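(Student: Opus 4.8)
The plan is to exploit the join structure $\textnormal{Cl}(R) = \textnormal{Cl}_1(R) \vee \textnormal{Cl}_2(R)$ together with the fact, noted just above the statement, that distances in $\textnormal{Cl}(R)$ never exceed $2$. Part (i) is immediate: if $|U(R)| = 1$ then every unit equals $1$, so any two vertices $(e,1)$ and $(f,1)$ satisfy $uv = 1 \cdot 1 = 1$ and are adjacent, making $\textnormal{Cl}(R)$ complete. For part (ii) the governing principle is that $\textnormal{diam}(\textnormal{Cl}(R)) = 2$ (non-adjacent vertices such as $(1,1)$ and $(1,t)$ with $t \neq 1$ exist once $|U(R)| \ge 2$), so any two vertices at distance $2$ are automatically mutually maximally distant, whereas for adjacent vertices one must check whether some neighbour of one lies at distance $2$ from the other. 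I first record that every vertex $(0,u) \in \textnormal{Cl}_1(R)$ is adjacent to all other vertices, hence has eccentricity $1$.

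I would then dispatch the two directions giving MMD pairs. If $e = f = 0$, both vertices are universal, so $d((e,u), x) = d((f,v), x) = 1$ for all $x$, and the pair is trivially mutually maximally distant. If $e, f \neq 0$ with $ef \neq 0$ and $uv \neq 1$, then $(e,u) \nsim (f,v)$, so $d((e,u),(f,v)) = 2 = \textnormal{diam}(\textnormal{Cl}(R))$; since no vertex lies farther than the diameter, each is maximally distant from the other, giving MMD.

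For the converse I would rule out the remaining pairs. If exactly one of $e,f$ is zero, say $e = 0 \neq f$, I pick a neighbour $(f,s)$ of the universal vertex $(0,u)$ with $s \neq v$ and $sv \neq 1$; such $s$ exists because the only forbidden values are $v$ and $v^{-1}$, and when $|U(R)| = 2$ these coincide. Then $(f,s) \nsim (f,v)$, so $d((f,v),(f,s)) = 2 > 1 = d((0,u),(f,v))$, and the pair is not MMD. If $e,f \neq 0$ and $ef = 0$ (which forces $e \neq f$), the same vertex $(f,s)$ is adjacent to $(e,u)$ by orthogonality but non-adjacent to $(f,v)$, again breaking maximal distance. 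The crucial remaining subcase is $e,f \neq 0$, $ef \neq 0$ and $uv = 1$, where adjacency is forced only by the unit condition. When $e \neq f$ I would use the idempotent $g = f(1-e)$, which satisfies $ge = 0$ and $gf = g$; it is nonzero exactly when $ef \neq f$, and in that event $(g,s)$ with $s \neq u$ is a neighbour of $(e,u)$ non-adjacent to $(f,v)$. If instead $ef = f$ then $g = 0$, so I would switch to $g' = e(1-f) = e - f \neq 0$ (nonzero since $e \neq f$), which is orthogonal to $f$ and produces a bad neighbour $(g',s)$, $s \neq v$, of $(f,v)$. Finally, when $e = f$ the equality $uv = 1$ with $u \neq v$ forces $v^2 \neq 1$, i.e. $v \in U''(R)$; then $(1,v)$ (if $e \neq 1$) or $(h,v)$ with $h$ a non-trivial idempotent (if $e = 1$) is a neighbour of $(e,u)$ non-adjacent to $(e,v)$.

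The main obstacle is precisely this last family $ef \neq 0$, $uv = 1$, where adjacency comes from the units rather than the idempotents, so one cannot simply reuse an orthogonal idempotent. The key devices are the observation that $g = f(1-e)$ vanishes exactly when $ef = f$ (permitting the symmetric switch to $e(1-f)$), and the elementary fact that $uv = 1$ with $u \neq v$ pushes both units into $U''(R)$, which rescues the degenerate situation $e = f$. Everything else reduces to the diameter-$2$ principle and routine choices of a unit avoiding at most two forbidden values, feasible because $|U(R)| \ge 2$.
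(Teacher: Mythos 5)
Your proof is correct, and its skeleton coincides with the paper's: part (i) is immediate, and for part (ii) both arguments rest on the join structure $\textnormal{Cl}(R)=\textnormal{Cl}_1(R)\vee\textnormal{Cl}_2(R)$, the fact that $\textnormal{diam}(\textnormal{Cl}(R))=2$ (so non-adjacent pairs are automatically mutually maximally distant), and the observation that the universal vertices $(0,u)$ are mutually maximally distant among themselves. The genuine difference is in how the adjacent pairs are excluded. The paper disposes of the case $x\sim y$ with $e,f\neq 0$ by citing Theorem \ref{mmdcl2r} and transferring ``not mutually maximally distant'' from $\textnormal{Cl}_2(R)$ to $\textnormal{Cl}(R)$ (a transfer that is valid because the witnessing neighbour is non-adjacent to the other vertex, hence still at distance $2$ in the join, but which the paper leaves largely implicit). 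You instead work entirely inside $\textnormal{Cl}(R)$ and build explicit witnesses: the vertex $(f,s)$ with $s\notin\{v,v^{-1}\}$ for the cases $e=0\neq f$ and $ef=0$, and, for the delicate case $ef\neq 0$, $uv=1$, the idempotent $g=f(1-e)$ (switching to $e(1-f)$ when $ef=f$, and to $(1,v)$ or $(h,v)$ with $v\in U''(R)$ when $e=f$). All of these checks are sound --- in particular $g$ is idempotent, $ge=0$, $gf=g$ by commutativity, and $g=0$ exactly when $ef=f$, so the two subcases are exhaustive --- and your existence argument for a unit avoiding $\{v,v^{-1}\}$ is correct since $v\neq v^{-1}$ forces $|U(R)|\ge 3$. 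What your route buys is a self-contained proof that does not depend on Theorem \ref{mmdcl2r} and makes the subgraph-to-join step explicit; what the paper's route buys is brevity, since the same $1-e$ manipulations have already been carried out in the proof of Theorem \ref{mmdcl2r}.
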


\begin{proof}

    Let $R$ be a ring with some non-trivial idempotents. If $|U(R)| =1$, then $V(\textnormal{Cl}(R)) = \{ (e,1) ~ | ~ e \in \textnormal{Id}(R)\}$ and so $\textnormal{Cl}(R) \cong K_{|\textnormal{Id}(R)|}$. Now suppose that $|U(R)| \ge 2$. Note that $\textnormal{Cl}(R) = \textnormal{Cl}_1(R) \vee \textnormal{Cl}_2(R)$, where $\textnormal{Cl}_1(R)$ is a complete subgraph of $\textnormal{Cl}(R)$. Thus, if $d(x,y) \in \{ 2,3 \}$ in $\textnormal{Cl}_2(R)$, then  $d(x,y) =2$ in $\textnormal{Cl}(R)$. Let $x=(e,u)$ and $y=(f, v)$ be two distinct vertices of $\textnormal{Cl}(R)$. If $x \nsim y$, then $d(x,y) = \textnormal{diam}(\textnormal{Cl}(R)) = 2$. Consequently, $x$ and $y$ are mutually maximally distant vertices. 
    
    Now let $x \sim y$. Suppose that $e, f \neq 0$. Then by Theorem \ref{mmdcl2r}, note that $x$ and $y$ are not mutually maximally distant vertices in the graph $\textnormal{Cl}_2(R)$. It follows that $x$ and $y$ are not mutually maximally distant vertices in $\textnormal{Cl}(R)$. Now suppose that $e=0$ and $f \neq 0$. Since $|U(R)| \ge 2$, we can choose $v' \in U(R) \setminus \{v\}$ such that $v v' \neq 1$. Then note that $(1, v') \in N(x)$ and $d(y, (1,v')) =2 > d(x,y)$. Therefore, $x$ and $y$ are not mutually maximally distant vertices. Finally, let $e=f=0$. Then for each $z \in V(\textnormal{Cl}(R))$, we have $d(x,z) = d(y,z) =d(x,y) =1$. Consequently, $x$ and $y$ are mutually maximally distant vertices. Thus, $(e,u)$ and $(f,v)$ are mutually maximally distant vertices if and only if either $e=f=0$ or $(e,u) \nsim (f,v)$. This completes our proof. 
\end{proof}

Now we study the structure of the strong resolving graph of the clean graphs. Note that when $|U(R)| =1$, then $\textnormal{Cl}(R)$ and $\textnormal{Cl}(R)$ are complete graphs. Consequently, the graph $\textnormal{Cl}(R)_{SR}$ and $\textnormal{Cl}_2(R)_{SR}$ are also complete graphs and isomorphic to the graphs $\textnormal{Cl}(R)$ and $\textnormal{Cl}(R)$, respectively. Now we study the strong resolving graph of the clean graphs when $|U(R)| \ge 2$. For this, define a graph $K$ such that $V(K) = \{(e, u) ~ | ~ e \in \textnormal{Id}(R) \setminus \{ 0,1\}, ~ u \in U(R) \}$ and two vertices $(e, u)$, $(f, v)$ are adjacent if and only if $e f \neq 0$, $u v \neq 1$. Next, consider a graph $H'$ such that $V(H') = V(\textnormal{Cl}_2(R))$ and two vertices $(e, u)$, $(f, v) \in V(H')$ are adjacent if and only if one of the following holds:
  \begin{enumerate}[\rm(i)]
         \item $e=f=1$  and $u v \neq 1$.
         \item $e,f \neq 1$, $e f \neq 0$ and $u v \neq 1$.
         \item $e =1$, $f \neq 1$ and $u =v \in U''(R)$.
         \end{enumerate}

Using the above-defined graphs $K$ and $H'$, we have the following structure theorems of the strong resolving graph of the graphs $\textnormal{Cl}_{2}(R)$ and $\textnormal{Cl}(R)$.

\begin{theorem}\label{graphgcl2r}
   Let $R$ be a ring with some non-trivial idempotents. Then the following holds for the graph $\textnormal{Cl}_2(R)$.
     \begin{enumerate}[\rm(i)]
        \item If $U''(R) = \emptyset$ and $\textnormal{Cl}_2(R)$ is not a complete graph, then $\textnormal{Cl}_2(R)_{SR} = K_{|U(R)|} + K$.
        \item If $U''(R) \neq \emptyset$ and $\textnormal{Cl}_2(R)$ is not a complete graph, then $\textnormal{Cl}_2(R)_{SR} \cong H'$.
         \item $V(\textnormal{Cl}_2(R)) = V(\textnormal{Cl}_2(R)_{SR})$.
    \end{enumerate}
   
\end{theorem}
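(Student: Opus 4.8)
The plan is to reduce all three parts to the mutually maximally distant (MMD) characterization of Theorem \ref{mmdcl2r}, with part (iii) carrying the real content. First I observe that the hypothesis ``$\textnormal{Cl}_2(R)$ is not a complete graph'' forces $|U(R)| \ge 2$, since otherwise Theorem \ref{mmdcl2r}(i) would make $\textnormal{Cl}_2(R)$ complete; hence throughout (i) and (ii) I may freely invoke the MMD conditions (a)--(c) of Theorem \ref{mmdcl2r}.

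I would prove (iii) first: every vertex of $\textnormal{Cl}_2(R)$ lies in the boundary $\partial(\textnormal{Cl}_2(R))$, i.e. admits an MMD partner. I split on the idempotent coordinate. For a vertex $(e,u)$ with $e \in \textnormal{Id}(R) \setminus \{0,1\}$, I claim $(e,v)$ is an MMD partner via condition (b) for a suitable $v$: since $e e = e \neq 0$, I only need $v \neq u$ and $uv \neq 1$, i.e. $v \notin \{u, u^{-1}\}$. If $u \in U'(R)$ this forbidden set is $\{u\}$ and $|U(R)| \ge 2$ suffices; if $u \in U''(R)$ then $1, u, u^{-1}$ are three distinct units, so in particular $v = 1$ is admissible. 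For a vertex $(1,u)$: if $u \in U'(R)$, any $v \neq u$ gives $uv \neq 1$, so $(1,u)$ and $(1,v)$ are MMD via (a); if $u \in U''(R)$, I instead pick a non-trivial idempotent $f$ (available by the hypothesis that $R$ has non-trivial idempotents) and use (c) to obtain the MMD pair $(1,u), (f,u)$. This exhausts all vertices and proves (iii). I would also remark that (iii) holds in the complete case as well, since in a complete graph on at least two vertices every vertex is MMD from every other.

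Given (iii), parts (i) and (ii) are bookkeeping against the definitions. Since $V(\textnormal{Cl}_2(R)_{SR}) = \partial(\textnormal{Cl}_2(R)) = V(\textnormal{Cl}_2(R)) = V(H')$, and the edges of $\textnormal{Cl}_2(R)_{SR}$ are exactly the MMD pairs, which by Theorem \ref{mmdcl2r} are precisely the pairs satisfying (i)--(iii) defining $H'$, I conclude $\textnormal{Cl}_2(R)_{SR} = H'$, giving (ii). For (i), when $U''(R) = \emptyset$ condition (c) is vacuous, so no MMD edge joins an $e=1$ vertex to an $e \neq 1$ vertex; the vertices $\{(1,u) : u \in U(R)\}$ form a clique $K_{|U(R)|}$ because every unit is an involution, whence distinct $u, v$ satisfy $uv \neq 1$ and (a) applies, while the $e \neq 1$ vertices with adjacency given by (b) are by construction the graph $K$. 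Thus $\textnormal{Cl}_2(R)_{SR} = K_{|U(R)|} + K$.

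The main obstacle is the verification in (iii) that every vertex has an MMD partner, i.e. the case analysis on whether $u$ is an involution and whether enough units exist to choose an admissible second coordinate. The arithmetic fact that unlocks the $U''(R)$ subcase is that $u \neq u^{-1}$ forces $1, u, u^{-1}$ to be three distinct units, which guarantees room to pick $v$ (and in particular makes $v = 1$ available for the $e \neq 1$ vertices).
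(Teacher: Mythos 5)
Your proposal is correct and takes essentially the same route as the paper: both reduce everything to the MMD characterization of Theorem \ref{mmdcl2r}, verify that every vertex admits an MMD partner (your case analysis on $e$ and on whether $u\in U'(R)$ or $U''(R)$ mirrors the paper's), and then match the resulting adjacencies against the definitions of $K$ and $H'$. The only difference is organizational — you prove (iii) first and deduce (i) and (ii) from it, while the paper establishes the vertex-set equality inside the proofs of (i) and (ii) and reads off (iii) afterward — which does not change the substance of the argument.
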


   

\begin{proof}
       \rm(i) Let $U''(R) = \emptyset$. Since $\textnormal{Cl}_2(R)$ is not a complete graph, we have $|U(R)| \ge 2$. Define a graph $H$ such that $V(H) = \{(1, u) ~ | ~ u \in U(R) \}$ and two vertices $(1, u)$, $(1, v)$ are adjacent if and only if $u v \neq 1$. First, we show that $\textnormal{Cl}_2(R)_{SR} = H + K$. To show that we prove that $V(\textnormal{Cl}_2(R)_{SR}) = V(H) \cup V(K)$ and $x \sim y$ in $\textnormal{Cl}_2(R)_{SR}$ if and only if $x \sim y$ in $H$ or $x \sim y$ in $K$. It is clear that $V(\textnormal{Cl}_2(R)_{SR}) \subseteq V(\textnormal{Cl}_2(R)) = V(H) \cup V(K)$.
        Let $(1,u') \in H$. Since, $|U(R)| \ge 2$, we can choose $v' \in U(R)$ such that $u' v' \neq 1$. By Theorem \ref{mmdcl2r}, it follows that $(1,v')$ and $(1,u')$ are mutually maximally distant vertices and so $(1,u') \in V(\textnormal{Cl}_2(R)_{SR}$. It implies that $V(H) \subseteq V(\textnormal{Cl}_2(R)_{SR}$. Now let $(e,u') \in K$. Then for each $v' \in U(R) \setminus \{ u' \}$, where $u' v' \neq 1$, note that $(e,u')$ and $(e,v')$ are mutually maximally distant vertices. Thus, $(e,u') \in V(\textnormal{Cl}_2(R)_{SR})$ and so $V(K) \subseteq V(\textnormal{Cl}_2(R)_{SR}$. Consequently, $V(H) \cup V(K) \subseteq V(\textnormal{Cl}_2(R)_{SR}$ and so $V(\textnormal{Cl}_2(R)_{SR}) = V(H) \cup V(K)$. By Theorem \ref{mmdcl2r}, note that if $x \sim y$ in $H$, then $x \sim y$ in $\textnormal{Cl}_2(R)_{SR}$. Also, if $x \sim y$ in $K$, then $x \sim y$ in $\textnormal{Cl}_2(R)_{SR}$.

       Now, let $x = (e,u)$ and $y=(f,v)$ be two adjacent vertices in $ \textnormal{Cl}_2(R)_{SR}$. Then, $x$ and $y$ are mutually maximally distant. By Theorem \ref{mmdcl2r}, it implies that $uv \neq 1$ and $e f \neq 0$. Note that, since $|U(R)| \ge 2$, we can always choose $u,v \in U(R)$ such that $uv \neq 1$. If $e = 1$, then $x \in V(H)$. Also, by Theorem \ref{mmdcl2r}, we get $f=1$ and so $y \in V(H)$. Since $u v \neq 1$, we conclude that $x \sim y$ in $H$. Now, if $e \neq 1$, then $x \in V(K)$. Also, by Theorem \ref{mmdcl2r}, we obtain $f \neq 1$ and so $y \in V(K)$. Since $u v \neq 1$, we have that $x \sim y$ in $K$. Thus, $\textnormal{Cl}_2(R)_{SR} = H \cup K$. Also, $V(H) \cap V(K) = \emptyset$ follows that $\textnormal{Cl}_2(R)_{SR} = H + K$. Since $U''(R) = \emptyset$, for two distinct vertices $(1,u)$ and $(1,v)$ of $H$, we have $uv \neq 1$. It follows that $H$ is a complete graph $K_{|U(R)|}$. This completes our proof.


       
       \rm(ii) Let $U''(R) \neq \emptyset$. It is clear that $V(\textnormal{Cl}_2(R)_{SR}) \subseteq V(\textnormal{Cl}_2(R)) = V(H')$. Now let $(e,u) \in V(H')$. Since, $|U(R)| \ge 2$, we can choose $v (\neq u) \in U(R)$ such that $u v \neq 1$. By Theorem \ref{mmdcl2r}, the vertices $(e, u)$ and $(e,v)$ are mutually maximally distant. Also, $(e,v) \in V(H)$. Thus $(e,u) \in V(\textnormal{Cl}_2(R)_{SR})$. Consequently, $V(\textnormal{Cl}_2(R)_{SR}) = V(H')$. By Theorem \ref{mmdcl2r} and the adjacency of the graph $H'$, we have $\textnormal{Cl}_2(R)_{SR} \cong H'$.
       
      \rm(iii) By the first and second parts of the proof, it is easy to show that $V(\textnormal{Cl}_2(R)) = V(\textnormal{Cl}_2(R)_{SR})$.
\end{proof}


\begin{theorem}\label{clrsrnoidempotent}
   Let $R$ be a commutative ring with no non-trivial idempotent such that $\textnormal{Cl}(R)$ is not a complete graph. Then $\textnormal{Cl}(R)_{SR} = 2K_{|U(R)|}$. Moreover, $V(\textnormal{Cl}(R)) = V(\textnormal{Cl}(R)_{SR})$.

\end{theorem}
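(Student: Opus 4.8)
The plan is to combine the mutually-maximally-distant characterization from Theorem \ref{mmdclr} with the general identity $V(\Gamma_{SR}) = \partial(\Gamma)$ and the definition of the strong resolving graph. Since $R$ has no non-trivial idempotents and $\textnormal{Cl}(R)$ is not complete, Theorem \ref{mmdclr}(i) forces $|U(R)| \ge 2$, so we may apply part (ii): two vertices $(e,u)$ and $(f,v)$ are mutually maximally distant if and only if either $e=f=0$ or $e=f=1$. First I would record that this immediately gives the vertex set of $\textnormal{Cl}(R)_{SR}$. Indeed every vertex $(0,u)$ is mutually maximally distant from any other $(0,v)$ (and there is at least one such other vertex since $|U(R)|\ge 2$), and likewise every vertex $(1,u)$ is mutually maximally distant from any other $(1,v)$; moreover there are at least two units so each of the two ``slices'' $\{(0,u)\}$ and $\{(1,u)\}$ has at least two vertices. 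Hence $\partial(\textnormal{Cl}(R)) = V(\textnormal{Cl}(R))$, which also establishes the ``Moreover'' claim $V(\textnormal{Cl}(R)) = V(\textnormal{Cl}(R)_{SR})$.

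Next I would determine the edges. By the definition of $\textnormal{Cl}(R)_{SR}$, two vertices are adjacent exactly when they are mutually maximally distant, so by Theorem \ref{mmdclr}(ii) the adjacency relation is precisely: $(e,u) \sim (f,v)$ in $\textnormal{Cl}(R)_{SR}$ iff $\{e,f\}=\{0\}$ or $\{e,f\}=\{1\}$ (with the two endpoints distinct). In other words, two vertices are adjacent in $\textnormal{Cl}(R)_{SR}$ if and only if they share the same idempotent coordinate, which is either $0$ or $1$. This says exactly that the vertices split into two classes according to whether the idempotent is $0$ or $1$, every pair within a class is adjacent, and no edge runs between the two classes. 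Consequently $\textnormal{Cl}(R)_{SR}$ is the disjoint union of two complete graphs, one on the set $\{(0,u) : u \in U(R)\}$ and one on $\{(1,u) : u \in U(R)\}$.

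Finally I would count vertices in each class. Since each of $e=0$ and $e=1$ can be paired with any unit $u \in U(R)$, each class has exactly $|U(R)|$ vertices, and each induces a complete graph $K_{|U(R)|}$. Therefore $\textnormal{Cl}(R)_{SR} = K_{|U(R)|} + K_{|U(R)|} = 2K_{|U(R)|}$, where $+$ denotes disjoint union as defined in Section \ref{section2}.

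I do not anticipate a genuine obstacle here, as the theorem is essentially a direct translation of Theorem \ref{mmdclr}(ii) into the language of the strong resolving graph. The only point requiring a little care is verifying that every vertex of $\textnormal{Cl}(R)$ actually lies in the boundary $\partial(\textnormal{Cl}(R))$ — that is, that no vertex is isolated in $\textnormal{Cl}(R)_{SR}$ — which is why I would explicitly invoke $|U(R)| \ge 2$ to guarantee a partner vertex in the same class for each $(e,u)$; this is what upgrades the pointwise characterization into the full structural identification and simultaneously yields the ``Moreover'' statement.
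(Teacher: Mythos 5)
Your proposal is correct and follows essentially the same route as the paper: both identify the two classes $S=\{(0,u)\}$ and $S'=\{(1,u)\}$, use Theorem \ref{mmdclr}(ii) together with $|U(R)|\ge 2$ to show every vertex has a mutually maximally distant partner in its own class (giving $V(\textnormal{Cl}(R)_{SR})=V(\textnormal{Cl}(R))$), and then read off the adjacency to obtain two disjoint copies of $K_{|U(R)|}$. No gaps.
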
 

\begin{proof}
Define the sets $S$ and $S'$ such that $S = \{(0, u) ~ | ~ u \in U(R)\}$ and $S' = \{(1, v) ~ | ~ v \in U(R)\}$. Then note that $V(\textnormal{Cl}(R)_{SR}) \subseteq V(\textnormal{Cl}(R)) = S \cup S'$. Now we show that $ S \cup S' \subseteq V(\textnormal{Cl}(R)_{SR})$. Let $(0, u) \in S$. Since $\textnormal{Cl}(R)$ is not a complete graph, we have $|U(R)| \ge 2$. and so let $u' \in U(R) \setminus \{ u\}$. Then $(0,u)$ and $(0,u')$ are mutually maximally distant vertices of $\textnormal{Cl}(R)$. It implies that $(0,u) \in \textnormal{Cl}(R)_{SR}$ and so $S \subseteq V(\textnormal{Cl}(R)_{SR})$. Now let $(1,v) \in S'$. Consider $v' \in U(R)\setminus \{ v\}$. Then $(1,v)$ and $(1,v')$ are mutually maximally distant vertices of $\textnormal{Cl}(R)$. It follows that $(1,v) \in \textnormal{Cl}(R)_{SR}$ so $S' \subseteq V(\textnormal{Cl}(R)_{SR})$. Thus, $ S \cup S' \subseteq V(\textnormal{Cl}(R)_{SR})$ and hence $V(\textnormal{Cl}(R)_{SR}) = S \cup S'$. Also, Theorem \ref{mmdclr} and the fact $S \cap S' = \emptyset$ follows that $\textnormal{Cl}(R)_{SR} = \textnormal{Cl}(S)_{SR} + \textnormal{Cl}(S')_{SR}$. By Theorem \ref{mmdclr}, observe that $\textnormal{Cl}(S)_{SR} \cong K_{|S|}  = K_{|U(R)|}$. Also, $\textnormal{Cl}(S')_{SR} \cong K_{|S'|}  = K_{|U(R)|}$. Thus, $\textnormal{Cl}(R)_{SR} = 2K_{|U(R)|}$.
\end{proof}

Define the graph $G$ such that $V(G) = \{(e, u) ~ | ~ 0 \neq e \in \textnormal{Id}(R), ~ u \in U(R) \}$ and two vertices $(e, u)$, $(f, v)$ are adjacent if and only if $e f \neq 0$, $u v \neq 1$.

\begin{theorem}\label{graphgclr}
   Let $R$ be a commutative ring with some non-trivial idempotents such that $\textnormal{Cl}(R)$ is not a complete graph. Then $\textnormal{Cl}(R)_{SR} = G + K_{|U(R)|}$, where $G$ is a connected graph. Moreover, $V(\textnormal{Cl}(R)) = V(\textnormal{Cl}(R)_{SR})$.   
\end{theorem}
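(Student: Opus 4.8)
The plan is to read off everything from the characterization of mutually maximally distant (MMD) pairs in Theorem \ref{mmdclrwithid}. First I would observe that since $\textnormal{Cl}(R)$ is not complete, Theorem \ref{mmdclrwithid}(i) forces $|U(R)| \ge 2$, so the nontrivial case Theorem \ref{mmdclrwithid}(ii) is the one in force. I would then partition the vertex set of $\textnormal{Cl}(R)$ into $S = \{(0,u) : u \in U(R)\}$ and $V(G) = \{(e,u) : 0 \neq e \in \textnormal{Id}(R),\ u \in U(R)\}$, the latter being exactly the set defined just before the statement.

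The \emph{moreover} part, $V(\textnormal{Cl}(R)) = V(\textnormal{Cl}(R)_{SR})$, would come first by showing every vertex lies in $\partial(\textnormal{Cl}(R))$. For $(0,u)$, pairing it with any $(0,u')$, $u' \neq u$, gives an MMD pair by the case $e=f=0$. For $(e,u)$ with $e \neq 0$, I would pair it with $(e,v)$ where $v \neq u$ and $uv \neq 1$; here $e\cdot e = e \neq 0$, so Theorem \ref{mmdclrwithid}(ii) applies once such $v$ is found. Such $v$ exists because $|U(R)| \ge 2$: for $|U(R)| \ge 3$ one takes $v \in U(R) \setminus \{u,u^{-1}\}$, and the only delicate point is $|U(R)|=2$, where both units are self-inverse, so the other unit $v$ automatically satisfies $uv \neq 1$.

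Next I would determine the edges of $\textnormal{Cl}(R)_{SR}$ directly from Theorem \ref{mmdclrwithid}(ii). Within $S$, any two distinct vertices are MMD (case $e=f=0$), so $S$ induces $K_{|U(R)|}$. Within $V(G)$, two vertices $(e,u),(f,v)$ with $e,f \neq 0$ are MMD precisely when $ef \neq 0$ and $uv \neq 1$, which is exactly the adjacency of $G$, so the subgraph induced on $V(G)$ is $G$. Between $S$ and $V(G)$ no pair meets either MMD condition (one coordinate is $0$ and the other is not), so there are no crossing edges. Since $S \cap V(G) = \emptyset$, this yields $\textnormal{Cl}(R)_{SR} = G + K_{|U(R)|}$.

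The main obstacle, and the only step requiring a genuine argument, is showing $G$ is connected. The hard part will be producing paths, and I would do this using the hub $T = \{(1,w) : w \in U(R)\} \subseteq V(G)$. The vertex $(1,1)$ is adjacent in $G$ to every other vertex of $T$, since $1\cdot 1 = 1 \neq 0$ and $1\cdot w = w \neq 1$ whenever $w \neq 1$; hence $(1,1)$ is a universal vertex of the subgraph induced on $T$, so $T$ is connected. Finally, any $(e,u)$ with $e \neq 0,1$ is adjacent to some element of $T$: choosing $w \neq u^{-1}$ (possible as $|U(R)| \ge 2$) gives $e\cdot 1 = e \neq 0$ and $uw \neq 1$, so $(e,u) \sim (1,w) \in T$. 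Thus every vertex of $G$ reaches the connected set $T$, which shows $G$ is connected and completes the proof.
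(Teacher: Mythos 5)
Your proof is correct and follows essentially the same route as the paper: the same partition of $V(\textnormal{Cl}(R))$ into $S=\{(0,u) : u\in U(R)\}$ and $V(G)$, the same use of Theorem \ref{mmdclrwithid} to show every vertex is mutually maximally distant from some other vertex and to read off the edges, and the same conclusion $\textnormal{Cl}(R)_{SR}=K_{|U(R)|}+G$. The only cosmetic difference is the connectivity of $G$: you route all vertices through the connected dominating set $T=\{(1,w) : w\in U(R)\}$ with hub $(1,1)$, whereas the paper exhibits short paths via a case analysis on $gh$ and $u'v'$; both arguments are valid.
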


\begin{proof}
     Define a set $S = \{(0,u) ~ | u \in U(R)\}$. Then note that $V(\textnormal{Cl}(R)_{SR}) \subseteq V(\textnormal{Cl}(R)) = S \cup V(G)$.
     Let $(0,u') \in S$. Then take $v' \in U(R) \setminus \{ u'\}$. By Theorem \ref{mmdclrwithid} note that $(0,u')$ and $(0,v')$ are mutually maximally distant vertices in $\textnormal{Cl}(R)$. Also, $(0,v') \in S$. It follows that $(0, u') \in V(\textnormal{Cl}(R)_{SR})$ and so $S \subseteq V(\textnormal{Cl}(R)_{SR})$. Now let $(f,w) \in V(G)$. Since $|U(R)| \ge 2$, there exists $w' \in U(R)$ such that $w w' \neq 1$. Then $(f,w)$ and $(f,w')$ are mutually maximally distant vertices of $\textnormal{Cl}(R)$. Thus, $(f, w) \in V(\textnormal{Cl}(R)_{SR})$ and so $V(G) \subseteq V(\textnormal{Cl}(R)_{SR})$. Consequently,  $S \cup V(G) \subseteq V(\textnormal{Cl}(R)_{SR})$ and so $V(\textnormal{Cl}(R)_{SR})= S \cup V(G)$.

     Next, we show that $\textnormal{Cl}(R)_{SR}= \textnormal{Cl}(S)_{SR} + G $. Let $x = (e,u)$ and $y=(f,v)$ be two adjacent vertices in $\textnormal{Cl}(R)_{SR}$. By Theorem \ref{mmdclrwithid}, note that if $x \in S$, then $y \in S$. Also, $\textnormal{Cl}(S)_{SR}$ is a complete subgraph of $\textnormal{Cl}(R)_{SR}$. Now, if $x \notin S$ i.e, $e \neq 0$, then $x \in V(G)$. Again by Theorem \ref{mmdclrwithid}, we have $y \in V(G)$. Since, $x \sim y $ in $\textnormal{Cl}(R)_{SR}$, we have $x$ and $y$ are mutually maximally distant vertices in $\textnormal{Cl}(R)$. It follows that $e f \neq 0$, $u v \neq 1$ and so $x \sim y$ in the graph $G$. Moreover, if $x' \sim y'$ in $G$, then note that $x' \sim y'$ in $\textnormal{Cl}(R)_{SR}$. Thus, $\textnormal{Cl}(R)_{SR}= \textnormal{Cl}(S)_{SR} \cup G = K_{|U(R)|} \cup G= K_{|S|} \cup G$. Note that $S \cap V(G) = \emptyset$. Hence, $\textnormal{Cl}(R)_{SR}=  K_{|U(R)|} + G$.

    Now, we show that $G$ is a connected graph. let $(g, u')$ and $(h, v')$ be two distinct vertices of the graph $G$. If $g h \neq 0$ and $u' v' \neq 1$, then there is nothing to prove. If $g h =0$ but $u' v' \neq 1$, then  $(g, u') \sim (1,v') \sim (1,u') \sim (h, v')$. Now let $u' v' = 1$. Since $|U(R)| \ge 2$, we can choose $w' \in U(R) \setminus \{u',v'\}$ such that $w' u' \neq 1$ and $w' v' \neq 1$. If $g h \neq 0$ and $u' v' = 1$ or $g h =0$ and $u' v' = 1$, then $(g, u') \sim (1,w') \sim (h, v')$. Thus, $G$ is a connected graph.
 \end{proof}

Now we calculate the independence number of $\textnormal{Cl}_2(R)_{SR}$, when $|U(R)| \ge 2$.
 \begin{theorem}\label{independentsetcl2r}
    Let $R$ be a commutative ring with some non-trivial idempotents. Then the following holds
    \begin{enumerate}[\rm(i)]
        \item If $U''(R) = \emptyset$, then $\beta(\textnormal{Cl}_2(R)_{SR}) = |\textnormal{Id}(R)^*|+1$.
        \item If $|U''(R)| =2$ and $\max \{2|\textnormal{Id}_{\perp}(R)^*|, |\textnormal{Id}(R)^*| \} = |\textnormal{Id}(R)^*|$, then $\beta(\textnormal{Cl}_2(R)_{SR}) = |\textnormal{Id}(R)^*| +2$.
        \item If $|U''(R)| =2$ and $\max \{2|\textnormal{Id}_{\perp}(R)^*|, |\textnormal{Id}(R)^*| \} = 2|\textnormal{Id}_{\perp}(R)^*|$, then $\beta(\textnormal{Cl}_2(R)_{SR}) = 2|\textnormal{Id}_{\perp}(R)^*| +1$.
        \item If $|U''(R)| > 2$, then $\beta(\textnormal{Cl}_2(R)_{SR}) = \max \{2|\textnormal{Id}_{\perp}(R)^*|, |\textnormal{Id}(R)^*| \} +2$.
    \end{enumerate}
\end{theorem}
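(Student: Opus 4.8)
The plan is to reduce the computation to the independence number of the subgraph induced by the vertices $(e,u)$ with $e\neq 1$, and then account for the ``top'' vertices $(1,u)$. Throughout I write $N=|\textnormal{Id}(R)^*|$ and $m=|\textnormal{Id}_\perp(R)^*|$, and recall from Theorem~\ref{graphgcl2r} that $\textnormal{Cl}_2(R)_{SR}=K_{|U(R)|}+K$ when $U''(R)=\emptyset$ and $\textnormal{Cl}_2(R)_{SR}\cong H'$ when $U''(R)\neq\emptyset$. In either description the vertices with $e\neq 1$ induce exactly $K$ (adjacency $ef\neq 0$ and $uv\neq 1$), so any independent set $I$ splits as $I=I_{\mathrm{top}}\cup I_{K}$ with $I_{\mathrm{top}}$ consisting of vertices $(1,u)$ and $I_{K}\subseteq V(K)$. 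Since two top vertices are non-adjacent only when $uv=1$ and three units cannot be pairwise inverse, I would record at the outset that $|I_{\mathrm{top}}|\le 2$, with equality forcing $I_{\mathrm{top}}=\{(1,a),(1,a^{-1})\}$ for some $a\in U''(R)$.

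The core step is to prove $\beta(K)=\max\{2m,N\}$ (and $\beta(K)=N$ when $U''(R)=\emptyset$). For the lower bound I would exhibit two independent sets: the set $\{(e,1):e\in\textnormal{Id}(R)^*\}$ is independent since any two of its vertices have unit-product $1$, giving size $N$; and for a maximal orthogonal family $\{g_1,\dots,g_m\}$ of nonzero idempotents with a pair $\{a,a^{-1}\}\subseteq U''(R)$, the set $\{(g_i,a),(g_i,a^{-1}):1\le i\le m\}$ is independent (same $g_i$ gives unit-product $1$, distinct $g_i,g_j$ are orthogonal), giving size $2m$. For the upper bound, let $I$ be independent and set $I_e=\{u:(e,u)\in I\}$; as $e\cdot e=e\neq 0$, any two units in $I_e$ multiply to $1$, so $|I_e|\le 2$ with $|I_e|=2$ forcing $I_e=\{a,a^{-1}\}$, $a\in U''(R)$. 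Writing $A=\{e:|I_e|=2\}$ and $B=\{e:|I_e|=1\}$, so $|I|=2|A|+|B|$, the key structural observation is that each $e\in A$ is orthogonal to every other used idempotent: a used $f\neq e$ with $ef\neq 0$ and $(f,v)\in I$ would require $av=1$ and $a^{-1}v=1$ simultaneously, which is impossible. Hence $A$ is pairwise orthogonal and orthogonal to $B$. Since $N,m$ are finite the idempotents form a finite Boolean algebra with $M$ atoms, so $N=2^{M}-2$ and $m=M$; if $A$ occupies $t$ atoms then $|A|\le t$ and $B$ lies below the complementary idempotent, whence $|B|\le 2^{M-t}-1$ and $|I|\le 2t+2^{M-t}-1$. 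A short convexity check shows $2t+2^{M-t}-1\le\max\{2M,2^{M}-2\}=\max\{2m,N\}$ for all $1\le t\le M$, and the case $A=\emptyset$ gives $|I|=|B|\le N$; thus $\beta(K)\le\max\{2m,N\}$. When $U''(R)=\emptyset$ no pairs exist, so $A=\emptyset$ always and $\beta(K)=N$.

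It remains to assemble the four cases using $|I|\le\beta(K)+2$. For part (i), $U''(R)=\emptyset$ makes $\textnormal{Cl}_2(R)_{SR}=K_{|U(R)|}+K$ a disjoint union, so $\beta=\beta(K_{|U(R)|})+\beta(K)=1+N$. For the rest $U''(R)\neq\emptyset$ and $\beta(K)=\max\{2m,N\}$. When $|U''(R)|>2$ (part (iv)), the bound $+2$ is attained: to an optimal $I_K$ using only the unit $1$ (the $\max=N$ regime) I append the top pair $\{(1,a),(1,a^{-1})\}$ with $a\in U''(R)$; to one using a pair $\{a,a^{-1}\}$ on orthogonal idempotents (the $\max=2m$ regime) I append $\{(1,b),(1,b^{-1})\}$ with $b\in U''(R)\setminus\{a,a^{-1}\}$, available since $|U''(R)|\ge 4$, and in neither case does a type-(iii) edge of $H'$ arise; thus $\beta=\max\{2m,N\}+2$. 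When $|U''(R)|=2$, Proposition~\ref{twononselfinvertible} gives a single pair $U''(R)=\{a,a^{-1}\}$. In the regime $N\ge 2m$ (part (ii)) the set $\{(e,1):e\in\textnormal{Id}(R)^*\}$ uses only the self-inverse unit $1$, so appending $\{(1,a),(1,a^{-1})\}$ creates no cross edge and $\beta=N+2$. In the regime $2m>N$ (part (iii)), attaining $|I_K|=2m$ forces the non-top part to use the unique pair $\{a,a^{-1}\}$ as units, so a top pair $\{(1,a),(1,a^{-1})\}$ would collide via a type-(iii) edge; hence at most one top vertex $(1,s)$ with $s\in U'(R)$ can be adjoined, and any $I_K$ with $|I_K|\le 2m-1$ also caps the total at $2m+1$, giving $\beta=2m+1$.

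The hard part will be the upper bound $\beta(K)\le\max\{2m,N\}$: the structural fact that ``pair'' idempotents are orthogonal to everything used is exactly what forces the two candidate extremal configurations, and the ensuing count over the Boolean algebra of idempotents (rather than a crude estimate such as $|A|+N$) is what pins down the value. The second delicate point is the forcing in part (iii), where one must argue that the unique non-self-inverse pair cannot simultaneously serve a top pair and a maximal non-top pairing. I would also verify the boundary $2m=N$ carefully: there the true value is $N+2$, so part (iii) is genuinely the strict regime $2m>N$ while part (ii) covers $N\ge 2m$.
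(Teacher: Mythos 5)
Your proposal is correct, and its overall architecture --- splitting an independent set of $\textnormal{Cl}_2(R)_{SR}$ into the ``top'' vertices $(1,u)$ (at most two, and two only if they form an inverse pair from $U''(R)$) and a part inside the subgraph $K$, then assembling the four cases from $\beta(K)=\max\{2|\textnormal{Id}_{\perp}(R)^*|,|\textnormal{Id}(R)^*|\}$ --- is the same as the paper's. Where you genuinely diverge is in the upper bound for $\beta(K)$. The paper partitions an independent set $S$ by units, fixes a transversal $C$ of the inverse pairs in $U''(R)$, and asserts that the idempotents occurring with units in $U'(R)\cup C$ form a pairwise orthogonal family $D$ (and similarly for the complementary set $D'$), concluding $|S|\le |D|+|D'|\le 2|\textnormal{Id}_{\perp}(R)^*|$ in the mixed case; as written, the orthogonality of $D$ is not justified, since two idempotents carried by the \emph{same} self-inverse unit need not be orthogonal. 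You instead classify each used idempotent $e$ by $|I_e|\in\{1,2\}$, prove the clean structural fact that any ``paired'' idempotent is orthogonal to every other used idempotent, and then count inside the finite Boolean algebra of idempotents ($N=2^M-2$, $m=M$) via the convex bound $2t+2^{M-t}-1\le\max\{2M,2^M-2\}$. This buys a fully rigorous upper bound at the cost of invoking the Boolean-algebra structure of $\textnormal{Id}(R)$, which is harmless here. Your treatment of the forcing in case (iii) (the unique pair $\{a,a^{-1}\}$ cannot serve both the top pair and a size-$2m$ configuration in $K$) matches the paper's intent, and you correctly flag the boundary $2m=N$, where the value is $N+2$ and (iii) must be read as the strict regime --- a point the paper only clarifies in a later remark.
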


\begin{proof}
We prove the result in the following cases.

\noindent\textbf{Case 1.} $U''(R) = \emptyset$. By Theorem \ref{graphgcl2r}, we have $\beta(\textnormal{Cl}_2(R)_{SR}) = \beta (K) + \beta(K_{|U(R)|}) = \beta(K) + 1$. For $u \in U(R)$, define a set $A_u = \{ (e,u) ~ | ~ e \in \textnormal{Id}(R)^* \}$. Notice that $A_u$ is an independent set of the graph $K$. Now, we show that $A_u$ is an independent set of maximum size of the graph $K$. Let $B$ be any other arbitrary independent set of the graph $K$. Then we prove that $|B| \le |A_u| = |\textnormal{Id}(R)^*|$. For each $u \in U(R)$, consider the set 
    \[ B_u = \{ (e,u) \in B ~ | ~ \text{for some} ~ e \in  \textnormal{Id}(R)^* \}. \]
Note that $B_u$, where $u \in U(R)$, forms a partition of the set $B$. Also, for distinct $v$ and $v' \in U(R)$, $(f,v)$ and $(f, v')$ are adjacent in $\textnormal{Cl}_2(R)_{SR}$, where $f \in \textnormal{Id}(R)^*$. It follows that both $(f,v)$ and $(f, v')$ do not belong to $B$. Consequently, $\sum_{u \in U(R)} |B_u| \le |\textnormal{Id}(R)^*|$. It follows that $|B| \le |A_u|$. It implies that $A_u$ is an independent set of maximum size of the graph $K$, and so $\beta (K) = |A_u| = |\textnormal{Id}(R)^*|$. Thus, $\beta(\textnormal{Cl}_2(R)_{SR}) = |\textnormal{Id}(R)^*|+1$.

\noindent\textbf{Case 2.} $U''(R) \neq \emptyset$. By Theorem \ref{graphgcl2r}, we have $\beta(\textnormal{Cl}_2(R)_{SR}) = \beta(H')$. Let $X$ be an independent set of maximum size of the graph $H'$. Let $X_{(1)} = \{(e, u) \in X ~ | ~ e =1 \} $. Let $(1,u)$ be an element of the set $X_{(1)}$. If $u \in U'(R)$, then for all $(1,v) \in V(\textnormal{Cl}_2(R)_{SR}) \setminus (1,u)$, we have $(1,u) \sim (1,v)$ in $\textnormal{Cl}_2(R)_{SR}$. It follows that $|X_{(1)}| \le 1$. Now let $u \in U''(R)$. Since $(1,u) \nsim (1, u^{-1})$ in $\textnormal{Cl}_2(R)_{SR}$, note that $(1, u^{-1})$ can be an element of the set $X_{(1)}$. Now, if $(1,w)$ be another element of the set $X_{(1)}$, then $w =u^{-1}$. It implies that $|X_{(1)}| \le 2$. Let $X' = X \setminus X_{(1)}$. Now we calculate $|X'|$ and $|X_{(1)}|$. First, let $Y = \{(e,u) \in V(\textnormal{Cl}_2(R)_{SR}) ~ | ~ e \neq 1 \}$. Next, we find $\beta(\textnormal{Cl}_2(Y)_{SR})$. For each $u \in U(R)$, we define the set 
\[  
I_u =
 \begin{cases}
\{ (e,u) ~ | ~  e \in \textnormal{Id}(R)^* \}& \;\; \;\;  \text{if} ~ u \in U'(R)\\
 
\{ (e,u), (e, u^{-1}) ~ | ~ 0 \neq e \in \textnormal{Id}_{\perp}(R)^*\} & \;\;  \;\; \text{if}  ~ u \in U''(R).
 \end{cases}
    \]

    \textbf{Claim:} $I_u$ is an independent set of maximum size of the graph $\beta(\textnormal{Cl}_2(Y)_{SR})$.
    
    It is clear that for each $u \in U(R)$, $I_u$ is an independent set of $\textnormal{Cl}_2(Y)_{SR}$. Also, note that if $ u \in U'(R)$, then $|I_u| =|\textnormal{Id}(R)^*|$. Otherwise, $|I_u| =2|\textnormal{Id}_{\perp}(R)^* |$.  
Suppose that $m = \max\{|I_u| ~ | ~ u \in U(R) \}$. Then
    \[
    m =
 \begin{cases}
  |\textnormal{Id}(R)^*| & \;\; \;\;  \text{if} ~  U''(R) = \emptyset\\
 
\max \{2|\textnormal{Id}_{\perp}(R)^*|, |\textnormal{Id}(R)^*| \} & \;\;  \;\; \text{if}  ~ U''(R) \neq \emptyset.
 \end{cases}
    \]
   Suppose that $S$ is any arbitrary independent set of the graph $\textnormal{Cl}_2(Y)_{SR}$. Then, we show that $|S| \le m$.  For each $u \in U(R)$, consider the set 
    \[ S_u = \{ (e,u) \in S ~ | ~ \text{for some} ~ e \in  \textnormal{Id}(R) \}. \]
    Note that $S_u$ forms a partition of the set $S$. Now if $S = S_u$ for some $u \in U'(R)$, then $|S| \le |\textnormal{Id}(R)^*|$. It follows that $|S| \le m$. If $S = S_u \cup S_u^{-1}$ for some $u \in U''(R)$, then for all $(e,u) \in S_u$, observe that $e \in \textnormal{Id}_{\perp}(R)^*$. It follows that $|S_u| \le |\textnormal{Id}_{\perp}(R)^*|$. Similarly, $|S_u^{-1}| \le |\textnormal{Id}_{\perp}(R)^*|$. Consequently, $|S| \le 2  |\textnormal{Id}_{\perp}(R)^*|$ and so $|S| \le m$. Now, suppose that the above are not the cases for the set $S$. For each $u \in U(R)$, consider the set
    \[
E_u = \{ e \in  \textnormal{Id}(R) ~ | ~  (e,u) \in S_u \}.
    \]
   Now, define a set $C \subseteq U''(R)$ such that for each $u \in U''(R)$, one of the elements $u$ or $u^{-1}$ belongs to $C$. Let $D = \bigcup \{ E_u ~ | ~ u \in U'(R) \cup C \}$. Observe that all the elements of the set $D$ are orthogonal idempotents of the ring $R$. Thus, $|D| \le |\textnormal{Id}_{\perp}(R)^*|$. Now consider the set $D' = \{ (f,v) \in S ~ | ~ v \in U''(R) \setminus C\}$. Let $(h', w') \in S$. If there does not exists $(h'',w'') \in S$ such that $w'w'' =1$, then take $h' \in D$ i.e. $w' \in C$. Suppose that $(f_1, v_1) \in D'$. Then there exists $(e,u) \in S$ such that $e \in D$ and $u v_1 = 1$.
   Also, if $(f_2, v_2) \in D'$, then $v_1 v_2 \neq 1$ and so $f_1 f_2 = 0$. It implies that $f \in \textnormal{Id}_{\perp}(R)^*$ for all $(f, v) \in D'$. Consequently, $|D'| \le |\textnormal{Id}_{\perp}(R)^*|$. Let $(e, u_1)$ and $(e, u_2) \in S$, where $u_1, u_2 \in U'(R) \cup C$. Then note that $e \in D$ and $u_1 =u_2$, i.e.,  $(e, u_1) = (e, u_2)$. Consequently, $|S| = |D| + |D'|$. It follows that $|S| \le m$. This, $I_u$ is the maximal independent set of the maximum size of the graph $\textnormal{Cl}_2(Y)_{SR}$ and so $\beta(\textnormal{Cl}_2(Y)_{SR}) = \max \{2|\textnormal{Id}_{\perp}(R)^*|, |\textnormal{Id}(R)^*| \}$. Note that $|X'| \le \beta(\textnormal{Cl}_2(Y)_{SR})$. Now, based on the cardinality of the set $U''(R)$, we calculate $|X'|$ and $|X_{(1)}|$. For this, we have the following sub-cases.

   \noindent\textbf{Sub-case (i).} $|U''(R)| =2$. If $\max \{2|\textnormal{Id}_{\perp}(R)^*|, |\textnormal{Id}(R)^*| \} = |\textnormal{Id}(R)^*|$, then take $X' = I_u$, where $u \in U'(R)$. Consider the set $\{(1,v), (1, v^{-1}) ~ | ~ v \in U''(R) \}$. Note that $I_u \cup \{(1,v), (1, v^{-1}) \} $ is an independent set of the graph $H'$. Now choose $X_{(1)} = \{(1,v), (1, v^{-1}) \}$ and so $|X_{(1)}| = 2 $. Consequently, $\beta(\textnormal{Cl}_2(R)_{SR}) = |I_u| + |X_{(1)}| = |\textnormal{Id}(R)^*| +2$.
   
   Next, let $\max \{2|\textnormal{Id}_{\perp}(R)^*|, |\textnormal{Id}(R)^*| \} = 2|\textnormal{Id}_{\perp}(R)^*|$. Suppose that $(e,w), (f, w^{-1}) \in X'$, where $w \in U''(R)$. Then both the vertices $(1,w)$ and $(1,w^{-1})$ does not belong to $X_{(1)}$. Consequently, $|X_{(1)}|  \le 1$. Choose $X' =I_u$, where $u \in U''(R)$ and take $X_{(1)} = \{(1,v)\}$, where $v \in U'(R)$. Then, $I_u \cup \{(1,v) \}$ is an independent set of maximum size of $\textnormal{Cl}_2(R)_{SR}$ and so $\beta(\textnormal{Cl}_2(H')_{SR}) = |I_u| + |X_{(1)}| = 2|\textnormal{Id}_{\perp}(R)^*| +1$. Now suppose that there does not exist $e, f \in \textnormal{Id}(R)^*$ such that $(e,w), (f, w^{-1}) \in X'$. Then note that $|X'| \le |\textnormal{Id}(R)^*|$. Take $X' = I_u$ for some $u \in U'(R)$, then by the previous argument, we have $I_u \cup \{(1,w), (1, w^{-1}) \}$, where $w \in U''(R)$, is an independent set of maximum size of the graph $H'$ and so $|X_{(1)}| =2$. Since $\max \{2|\textnormal{Id}_{\perp}(R)^*|, |\textnormal{Id}(R)^*| \} = 2|\textnormal{Id}_{\perp}(R)^*|$, we have $|\textnormal{Id}(R)^*| \le 2|\textnormal{Id}_{\perp}(R)^*| -1$. Consequently, $|X|$ to be maximum, we must have $|\textnormal{Id}(R)^*| = 2|\textnormal{Id}_{\perp}(R)^*| -1$ and so $|X'| = 2|\textnormal{Id}_{\perp}(R)^*| -1$. Consequently, $\beta(\textnormal{Cl}_2(R)_{SR}) = |I_u| + |X_{(1)}| = 2|\textnormal{Id}_{\perp}(R)^*| +1$.

   \noindent\textbf{Sub-case (ii).} $|U''(R)| > 2$. Let $I_u$ be the maximal independent set of the graph $\beta(\textnormal{Cl}_2(Y)_{SR})$ for some $u \in U(R)$ i.e. $X' = I_u$. Since $|U''(R)| > 2$, we can always choose $v \in U''(R)$ such that $v \notin \{u, u^{-1} \}$. Consider the set $ \{ (1,v), (1,v^{-1}) \} $. Note that $I_u \cup \{(1,v), (1, v^{-1}) \} $ is a independent set of the graph $H'$. Now choose $X_{(1)} = \{(1,v), (1, v^{-1}) \}$ and so $|X_{(1)}| = 2 $. Consequently, $\beta(\textnormal{Cl}_2(R)_{SR}) = |I_u| + |X_{(1)}| = \max \{2|\textnormal{Id}_{\perp}(R)^*|, |\textnormal{Id}(R)^*| \} +2$.
\end{proof}

\begin{theorem}\label{relationclrcl2r}
Let $R$ be a commutative ring with some non-trivial idempotent elements. Then the following holds.

    \begin{enumerate}[\rm(i)]
        \item If $|U(R)| = 1$, then $\beta(\textnormal{Cl}(R)_{SR}) = \beta(\textnormal{Cl}_2(R)_{SR}) = 1$.
        \item If $|U(R)| \ge 2$ and $U''(R) = \emptyset$, then $\beta(\textnormal{Cl}(R)_{SR}) = \beta(\textnormal{Cl}_2(R)_{SR}) + 1$.
        \item If $|U''(R)| =2 $, then $\beta(\textnormal{Cl}(R)_{SR}) = \beta(\textnormal{Cl}_2(R)_{SR}) $.
         \item If $|U''(R)| > 2$ and $\max \{2|\textnormal{Id}_{\perp}(R)^*|, |\textnormal{Id}(R)^*| \} = |\textnormal{Id}(R)^*|$, then $\beta(\textnormal{Cl}(R)_{SR}) = \beta(\textnormal{Cl}_2(R)_{SR}) $.
        \item If $|U''(R)| > 2$ and $\max \{2|\textnormal{Id}_{\perp}(R)^*|, |\textnormal{Id}(R)^*| \} = 2|\textnormal{Id}_{\perp}(R)^*|$, then $\beta(\textnormal{Cl}(R)_{SR}) =  \beta(\textnormal{Cl}_2(R)_{SR}) -1$.
    \end{enumerate}
\end{theorem}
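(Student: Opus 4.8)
The plan is to reduce the whole statement to a single computation of $\beta(G)$ and then match it against the values of $\beta(\textnormal{Cl}_2(R)_{SR})$ already recorded in Theorem \ref{independentsetcl2r}. Part (i) is immediate, since for $|U(R)| = 1$ both $\textnormal{Cl}(R)$ and $\textnormal{Cl}_2(R)$ are complete, hence so are their strong resolving graphs, and the independence number of a nonempty complete graph is $1$. In cases (ii)--(v) we have $|U(R)| \ge 2$ (forced since $|U''(R)|$ equals $2$ or exceeds $2$ in these cases), so $\textnormal{Cl}(R)$ is not complete and Theorem \ref{graphgclr} applies: $\textnormal{Cl}(R)_{SR} = G + K_{|U(R)|}$ is a disjoint union. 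As independence numbers are additive over disjoint unions and $\beta(K_{|U(R)|}) = 1$, we obtain the master identity
\[
\beta(\textnormal{Cl}(R)_{SR}) = \beta(G) + 1 ,
\]
so it remains to determine $\beta(G)$.

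To evaluate $\beta(G)$, I would take a maximum independent set $A$ and split it as $A = A_1 \cup A_*$, where $A_1$ holds the vertices with $e = 1$ and $A_*$ the vertices with $e \neq 0,1$. Since the subgraph of $G$ induced on the vertices with $e \neq 0,1$ is precisely the graph $K$, the set $A_*$ is independent in $K$, and $\beta(K) = m$, where $m = |\textnormal{Id}(R)^*|$ when $U''(R) = \emptyset$ and $m = \max\{2|\textnormal{Id}_{\perp}(R)^*|, |\textnormal{Id}(R)^*|\}$ otherwise — this is exactly the number computed inside the proof of Theorem \ref{independentsetcl2r}. Two adjacency rules of $G$ then control $A_1$: vertices $(1,u),(1,v)$ are non-adjacent only if $uv = 1$, so $|A_1| \le 2$ with equality only for a pair $(1,u),(1,u^{-1})$, $u \in U''(R)$; and $(1,u)$ is non-adjacent to $(f,v)$ with $f \neq 0,1$ only if $uv = 1$. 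The second rule forces every unit appearing in $A_*$ to be the inverse of a unit in $A_1$ as soon as $A_1 \neq \emptyset$, which sharply limits $A_*$. Running through $A_1 = \emptyset$, $|A_1| = 1$ (separating $u \in U'(R)$ from $u \in U''(R)$) and $|A_1| = 2$, and discarding the dominated options using $|\textnormal{Id}_{\perp}(R)^*| \le |\textnormal{Id}(R)^*|$, I expect to obtain
\[
\beta(G) = \max\{\, m,\; |\textnormal{Id}(R)^*| + 1 \,\} .
\]
The value $|\textnormal{Id}(R)^*| + 1$ is realized by the full column $\{(e,u) : 0 \neq e \in \textnormal{Id}(R)\}$ for a fixed $u \in U'(R)$, whose vertices are pairwise non-adjacent because $u^2 = 1$.

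Granting this, the remaining cases are pure arithmetic with the master identity and Theorem \ref{independentsetcl2r}. If $U''(R) = \emptyset$ then $m = |\textnormal{Id}(R)^*|$, so $\beta(G) = |\textnormal{Id}(R)^*| + 1$ and $\beta(\textnormal{Cl}(R)_{SR}) = |\textnormal{Id}(R)^*| + 2 = \beta(\textnormal{Cl}_2(R)_{SR}) + 1$, giving (ii). If $U''(R) \neq \emptyset$ and $m = |\textnormal{Id}(R)^*|$ (that is, $|\textnormal{Id}(R)^*| \ge 2|\textnormal{Id}_{\perp}(R)^*|$), then $\beta(G) = |\textnormal{Id}(R)^*| + 1$, so $\beta(\textnormal{Cl}(R)_{SR}) = |\textnormal{Id}(R)^*| + 2$, which coincides with $\beta(\textnormal{Cl}_2(R)_{SR})$ in both the $|U''(R)| = 2$ and the $|U''(R)| > 2$ subcases; this yields the relevant instance of (iii) and all of (iv). If $U''(R) \neq \emptyset$ and $m = 2|\textnormal{Id}_{\perp}(R)^*| > |\textnormal{Id}(R)^*|$, then $2|\textnormal{Id}_{\perp}(R)^*| \ge |\textnormal{Id}(R)^*| + 1$, whence $\beta(G) = 2|\textnormal{Id}_{\perp}(R)^*|$ and $\beta(\textnormal{Cl}(R)_{SR}) = 2|\textnormal{Id}_{\perp}(R)^*| + 1$; comparing with Theorem \ref{independentsetcl2r}, this equals $\beta(\textnormal{Cl}_2(R)_{SR})$ when $|U''(R)| = 2$ (the remaining instance of (iii)) and equals $\beta(\textnormal{Cl}_2(R)_{SR}) - 1$ when $|U''(R)| > 2$ (case (v)).

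The main obstacle is the upper bound $\beta(G) \le \max\{m, |\textnormal{Id}(R)^*| + 1\}$; the matching lower bound is witnessed by the explicit sets above. This is exactly where the cross-adjacency $(1,u) \sim (f,v) \iff uv \neq 1$ of $G$ must be used: it is present in $G$ but absent from $H' \cong \textnormal{Cl}_2(R)_{SR}$, so it blocks the trick — available for $\textnormal{Cl}_2(R)_{SR}$ — of appending a free pair $(1,v),(1,v^{-1})$ to a maximum independent set of $K$, and this single missing pair is precisely what produces the $-1$ gap in case (v). A minor point to dispatch cleanly is the boundary $2|\textnormal{Id}_{\perp}(R)^*| = |\textnormal{Id}(R)^*|$, where the column of size $|\textnormal{Id}(R)^*| + 1$ dominates and keeps (iii) and (iv) consistent.
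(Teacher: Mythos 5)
Your proposal is correct and follows essentially the same route as the paper: both reduce to the identity $\beta(\textnormal{Cl}(R)_{SR}) = \beta(G)+1$ via Theorem \ref{graphgclr}, then analyze a maximum independent set of $G$ by splitting off the $e=1$ vertices and comparing the remainder against $\beta(K)$, and finally match the outcome with the values from Theorem \ref{independentsetcl2r}. Your closed formula $\beta(G) = \max\{\beta(K),\, |\textnormal{Id}(R)^*|+1\}$ is merely a cleaner packaging of the paper's case-by-case conclusion that $\beta(G)$ equals $\beta(K)$ or $\beta(K)+1$, including the correct treatment of the boundary case $2|\textnormal{Id}_{\perp}(R)^*| = |\textnormal{Id}(R)^*|$ that the paper relegates to a remark.
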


\begin{proof}
      If $|U(R)| =1$, then $\textnormal{Cl}(R)$ and $\textnormal{Cl}_2(R)$ are complete graphs. Consequently, $\beta(\textnormal{Cl}(R)_{SR}) = \beta(\textnormal{Cl}_2(R)_{SR}) = 1$. Next, we assume that $|U(R)| \ge 2$. Observe that $K \cong \textnormal{Cl}_2(Y)_{SR}$, where $Y = \{(e,u) \in V(\textnormal{Cl}_2(R)_{SR}) ~ | ~ e \neq 1 \}$ and so $\beta(\textnormal{Cl}_2(Y)_{SR}) = \beta(K)$. By the definition of the graphs $G$ and $K$, we have that $K$ is an induced subgraph of $G$. Thus, every independent set of $K$ is the independent set of $G$. Consequently, 
  \begin{equation}\label{eqa}
   \beta(K) \le \beta(G).
  \end{equation} 
  Now, we prove the results in the following cases.
  
  \noindent\textbf{Case-1:} $U''(R) = \emptyset$. Let $S$ be an independent set of maximum size of $G$. Consider the set $S' = \{ (1,u) \in V(G) ~ | ~ (1,u) \in S \}$. Then $S'$ is the subset of $S$, and note that $S \setminus S'$ is the independent set of the graph $K$. Let $(1,v_1)$ and $(1,v_2)$ are two distinct elements of $S'$. Then, by the adjacency of the graph $G$, we have $v_1 v_2 =1$. Since $U''(R) = \emptyset$, we conclude that $v_1=v_2$ and so $|S'| \le 1$. Consequently, $\beta(G) \le \beta(K) +1$. By the proof of the Theorem \ref{independentsetcl2r}, we know that $ I_u = \{(e,u) ~ | ~ e \in \textnormal{Id}(R)^* \}$, where $u \in U'(R)$, is an independent set of the maximum size of the graph $K$. Then, we can choose $(1,u) \in S'$ such that $S = I_u \bigcup \{ (1,u) \}$. Thus,
\begin{equation}\label{eq1}
\beta(G) = \beta(K) +1.
\end{equation}
By Theorem \ref{graphgclr}, we have $\beta(\textnormal{Cl}(R)_{SR}) = \beta(G) +\beta(K_{|U(R)|}) = \beta(G) +1$. Also by Theorem \ref{graphgcl2r}, we obtain $\beta(\textnormal{Cl}_2(R)_{SR}) = \beta(K) +\beta(K_{|U(R)|}) = \beta(K) +1$. By using the equation (\ref{eq1}), we have
\[ \beta(G) +1 = \beta(K) +1 +1. \]
It follows that
  \[ \beta(\textnormal{Cl}(R)_{SR}) = \beta(\textnormal{Cl}_2(R)_{SR}) + 1.\]
  
\noindent\textbf{Case-2:} $U''(R) \neq \emptyset$. By the Theorems \ref{graphgcl2r} and \ref{graphgclr}, we have that $\beta(\textnormal{Cl}_2(R)_{SR}) = \beta(H')$ and $\beta(\textnormal{Cl}(R)_{SR}) = \beta(G) +1$. Now, consider the sets $S$ and $S'$ as in Case-1. Now suppose that $(1,u)$ and $(1,v)$ are two distinct elements of the set $S'$, then $u v =1$. It implies that $|S'| \le 2$. Note that $S \setminus S'$ is the independent set of the graph $K$. Suppose that $|S'| =2$. Then $S' = \{ (1,v), (1,v^{-1}) \}$  for some $v \in U''(R)$. Now assume that $(e,w) \in S \setminus S'$, then $w v =1$ and $w v^{-1} = 1$, which is not possible. Consequently, $S \setminus S' = \emptyset$, and so $S = S'$. But we can always choose $e \in \textnormal{Id}(R)^*$ and $u' \in U'(R)$ such that $\{ (e,u'), (1-e, u'), (1, u') \} \subseteq S$ and so $|S| \ge 3$. Thus, our assumption $|S'| =2$ is not possible. Hence, $|S'| \le 1$. It follows that 
     \begin{equation}\label{eq2}
   \beta(G) \le \beta(K) +1.
  \end{equation}
  By the equations (\ref{eqa}) and (\ref{eq2}), we have

      \begin{equation}\label{eq3}
   \beta(K) \le \beta(G) \le \beta(K) +1.
  \end{equation}

Now, assume that $\max \{2|\textnormal{Id}_{\perp}(R)^*|, |\textnormal{Id}(R)^*| \} = |\textnormal{Id}(R)^*|$. Note that as the Case-1, for any $u \in U'(R)$, we can choose $S = I_u \bigcup \{ (1,u) \}$, where $ I_u = \{(e,u) ~ | ~  e \in \textnormal{Id}(R)^* \}$ is an independent set of maximum size of the graph $K$. Consequently, $\beta(G) = \beta(K) +1$. Thus, we have
\begin{equation}\label{eqA}
 \beta(\textnormal{Cl}(R)_{SR}) =\beta(G) +1 = \beta(K) +2.
    \end{equation}
 
  Next, assume that $\max \{2|\textnormal{Id}_{\perp}(R)^*|, |\textnormal{Id}(R)^*| \} = 2|\textnormal{Id}_{\perp}(R)^*|$. Then, by the proof of the Theorem \ref{independentsetcl2r}, there exists $e \in \textnormal{Id}(R)^*$ and $w \in U''(R)$ such that both $(e,w)$ and $(e, w^{-1})$ belongs to the set $S \setminus S'$ (independent set of the graph $K$). Now if $(1, w') \in S'$, then $w' w=1$ and $w' w^{-1} =1$, which is not possible. Thus, the set $S'$ is an empty set. Consequently, $S$ is an independent set of maximum size of the graph $G$ and $K$. Thus, $\beta(G) = \beta(K)$. Consequently, we have 
\begin{equation}\label{eqB}
   \beta(\textnormal{Cl}(R)_{SR}) =\beta(G) +1 = \beta(K) +1.
   \end{equation}

By the proof of the Theorem \ref{independentsetcl2r}, we have

\[ \beta(\textnormal{Cl}_2(R)_{SR}) =
\begin{cases}
  \beta(\textnormal{Cl}_2(Y)_{SR}) +2  &  ~~  \textnormal{if } ~~ |U''(R)| =2 ~ \text{and} ~ \max \{2|\textnormal{Id}_{\perp}(R)^*|, |\textnormal{Id}(R)^*| \} = |\textnormal{Id}(R)^*|, \\
 \beta(\textnormal{Cl}_2(Y)_{SR}) +1 & ~~ \textnormal{if } ~~ |U''(R)| =2 ~ \text{and} ~ \max \{2|\textnormal{Id}_{\perp}(R)^*|, |\textnormal{Id}(R)^*| \} = 2|\textnormal{Id}_{\perp}(R)^*|, \\
 \beta(\textnormal{Cl}_2(Y)_{SR}) +2 & ~~ \textnormal{if } ~~ |U''(R)| > 2.
\end{cases}
\]
Using the fact $\beta(\textnormal{Cl}_2(Y)_{SR}) = \beta(K)$, we have
\begin{equation}\label{eqC}
    \beta(\textnormal{Cl}_2(R)_{SR}) =
\begin{cases}
  \beta(K) +2  &  ~~  \textnormal{if } ~~ |U''(R)| =2 ~ \text{and} ~ \max \{2|\textnormal{Id}_{\perp}(R)^*|, |\textnormal{Id}(R)^*| \} = |\textnormal{Id}(R)^*|, \\
 \beta(K) +1 & ~~ \textnormal{if } ~~ |U''(R)| =2 ~ \text{and} ~ \max \{2|\textnormal{Id}_{\perp}(R)^*|, |\textnormal{Id}(R)^*| \} = 2|\textnormal{Id}_{\perp}(R)^*|, \\
 \beta(K) +2 & ~~ \textnormal{if } ~~ |U''(R)| > 2.
\end{cases}
\end{equation}
Combining the equations (\ref{eqA}), (\ref{eqB}) and (\ref{eqC}), we have the desired result.
\end{proof}

    In Theorem \ref{relationclrcl2r}, we derived the relation between the independence number of the graphs $\textnormal{Cl}(R)_{SR}$ and $\textnormal{Cl}_2(R)_{SR}$, when the ring $R$ has some non-trivial idempotents. Using Theorem \ref{independentsetcl2r} and Theorem \ref{relationclrcl2r}, one can easily calculate the independence number of the graph $\textnormal{Cl}(R)_{SR}$. Next, we calculate the independence number of the graph $\textnormal{Cl}(R)_{SR}$, when $R$ has no non-trivial idempotents.

    \begin{remark}
    Note that the independence number is the maximum cardinality of the independence set. Therefore, whenever $2|\textnormal{Id}_{\perp}(R)^*| = |\textnormal{Id}(R)^*|$, we have $\beta(\textnormal{Cl}_2(R)_{SR}) = |\textnormal{Id}(R)^*| +2$ and $\beta(\textnormal{Cl}(R)_{SR}) = \beta(\textnormal{Cl}_2(R){SR})$.
\end{remark}

    \begin{theorem}
   Let $R$ be a commutative ring with no non-trivial idempotent such that $\textnormal{Cl}(R)$ is not a complete graph. Then $\beta(\textnormal{Cl}(R)_{SR}) = 2$.
\end{theorem}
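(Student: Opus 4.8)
The plan is to invoke the structure theorem already established for this setting and then read off the independence number directly, since the strong resolving graph has been completely identified. Because $R$ has no non-trivial idempotents and $\textnormal{Cl}(R)$ is assumed not complete, Theorem \ref{clrsrnoidempotent} applies and gives $\textnormal{Cl}(R)_{SR} = 2K_{|U(R)|}$, the disjoint union of two complete graphs each on $|U(R)|$ vertices. Thus the whole problem reduces to computing $\beta(2K_{|U(R)|})$, and no further analysis of the ring is needed.

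First I would recall that any independent set of a complete graph contains at most one vertex, since every two distinct vertices of $K_{|U(R)|}$ are adjacent; hence $\beta(K_{|U(R)|}) = 1$. Next I would use the fact that the two complete-graph components of $2K_{|U(R)|}$ are vertex-disjoint with no edges between them, so an independent set of the whole graph is exactly the union of an independent set chosen in each component, and these choices are independent of one another. Consequently the independence number is additive over the two components, giving $\beta(\textnormal{Cl}(R)_{SR}) = \beta(K_{|U(R)|}) + \beta(K_{|U(R)|}) = 1 + 1 = 2$.

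There is essentially no obstacle here: once Theorem \ref{clrsrnoidempotent} pins down $\textnormal{Cl}(R)_{SR}$ as $2K_{|U(R)|}$, the conclusion is immediate from elementary properties of the independence number. The only point worth stating explicitly is that the hypothesis ``$\textnormal{Cl}(R)$ is not a complete graph'' is exactly what guarantees $|U(R)| \ge 2$, so that each component $K_{|U(R)|}$ is genuinely nonempty and the count of two components is correct; I would flag this so the reader sees why the result is clean rather than a degenerate single complete graph (which would be the case handled separately when $|U(R)| = 1$).
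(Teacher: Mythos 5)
Your argument is correct and is exactly the paper's route: the paper's proof consists of the single remark that the result follows from Theorem \ref{clrsrnoidempotent}, and you have simply spelled out the elementary computation $\beta(2K_{|U(R)|})=1+1=2$ together with the observation that non-completeness of $\textnormal{Cl}(R)$ forces $|U(R)|\ge 2$. No discrepancies to report.
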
 
\begin{proof}
    The proof is easy to observe by Theorem \ref{clrsrnoidempotent}.
\end{proof}

\section{Strong Metric Dimension of Clean graphs}\label{section4}

In this section, we investigate the strong metric dimension of the graphs $\textnormal{Cl}(R)$ and $\textnormal{Cl}_2(R)$. Further, we study the relation between the strong metric dimension of the graphs $\textnormal{Cl}(R)$ and $\textnormal{Cl}_2(R)$. We begin with the following theorem.

\begin{theorem}
    The strong metric dimension of $\textnormal{Cl}(R)$ $(\text{or} ~ \textnormal{Cl}_2(R))$ is finite if and only if the graph $\textnormal{Cl}(R)$ $(\text{or} ~ \textnormal{Cl}_2(R))$ is finite.  
\end{theorem}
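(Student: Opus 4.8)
The plan is to prove the two implications separately, handling $\textnormal{Cl}(R)$ and $\textnormal{Cl}_2(R)$ in parallel; throughout I write $\Gamma$ for whichever graph is under consideration, and for $\textnormal{Cl}_2(R)$ I keep the standing hypothesis that $R$ has non-trivial idempotents (which is precisely when $\textnormal{Cl}_2(R)$ is connected, so that $\textnormal{sdim}$ is defined and Theorem \ref{vertexcover} applies). The easy direction is immediate: if $\Gamma$ is finite then its entire vertex set is a strong resolving set, since for any two distinct vertices $x,y$ the vertex $y$ strongly resolves the pair via the trivial shortest path ending at $y$; hence $\textnormal{sdim}(\Gamma)\le |V(\Gamma)|<\infty$.

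For the converse I would argue the contrapositive: if $\Gamma$ is infinite, then $\textnormal{sdim}(\Gamma)$ is infinite. By Theorem \ref{vertexcover}, $\textnormal{sdim}(\Gamma)=\alpha(\Gamma_{SR})$, and the vertex covering number of any graph is at least its matching number (pairwise disjoint edges require distinct cover vertices). It therefore suffices to produce an infinite matching in $\Gamma_{SR}$, that is, an infinite family of pairwise vertex-disjoint mutually maximally distant pairs. Since $|V(\Gamma)|$ is essentially $|\textnormal{Id}(R)|\cdot|U(R)|$, infinitude of $\Gamma$ forces $|U(R)|=\infty$ or $|\textnormal{Id}(R)|=\infty$, and I split along these.

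If $|U(R)|=\infty$, I fix the idempotent $e_0=0$ for $\textnormal{Cl}(R)$ and $e_0=1$ for $\textnormal{Cl}_2(R)$, and work inside $\{(e_0,u)\,:\,u\in U(R)\}$. By Theorems \ref{mmdclr} and \ref{mmdclrwithid} this set is a clique of $\textnormal{Cl}(R)_{SR}$, and by Theorem \ref{mmdcl2r}(a) the pairs $(1,u),(1,v)$ are mutually maximally distant exactly when $uv\neq 1$, so in $\textnormal{Cl}_2(R)_{SR}$ the set spans a complete graph with at most a matching deleted; in either case an infinite such vertex set admits an infinite matching, obtained greedily by pairing $u$ with any remaining $v\neq u^{-1}$. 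If instead $|U(R)|<\infty$, then $|\textnormal{Id}(R)|=\infty$, so $R$ has non-trivial idempotents; assuming $\Gamma$ is non-complete we have $|U(R)|\ge 2$, and I fix units $u=1$ and $v\neq 1$, so $uv=v\neq 1$. Choosing infinitely many distinct idempotents $e_i$ that are nonzero (and also $\neq 1$ in the $\textnormal{Cl}_2(R)$ case), each pair $(e_i,u),(e_i,v)$ is mutually maximally distant by Theorem \ref{mmdclrwithid}(ii), respectively Theorem \ref{mmdcl2r}(b), because $e_i e_i=e_i\neq 0$ and $uv\neq 1$; distinct $e_i$ give vertex-disjoint edges, which is the required infinite matching. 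The degenerate subcase $|U(R)|=1$ is handled directly: then $\Gamma$ is a complete graph on infinitely many vertices, which trivially contains an infinite matching.

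The main obstacle is exactly the case $|U(R)|<\infty$ with $|\textnormal{Id}(R)|=\infty$, where the clique coming from varying the unit is finite and the matching must instead be manufactured from the idempotents; the crucial point that makes this succeed is that every idempotent is self-compatible, $e_i e_i=e_i\neq 0$, so a single fixed non-inverse unit pair $(u,v)$ promotes each of the infinitely many $e_i$ to one matching edge. Once an infinite matching in $\Gamma_{SR}$ is exhibited, the vertex cover of $\Gamma_{SR}$, and hence $\textnormal{sdim}(\Gamma)$, is infinite, completing the contrapositive and the proof.
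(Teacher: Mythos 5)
Your proof is correct, but it takes a genuinely different route from the paper's. The paper never looks at the strong resolving graph here: it only uses that a strong resolving set is in particular a resolving set, and then runs the classical bounded-diameter pigeonhole argument --- a resolving set $W$ of size $n$ assigns to each vertex outside $W$ a distance vector with entries in $\{1,2\}$ (resp.\ $\{1,2,3\}$ for $\textnormal{Cl}_2(R)$), and distinct vertices get distinct vectors, so $|V(\textnormal{Cl}(R))|\le 2^n+n$. That argument is short, self-contained, and independent of the structural results of Section~\ref{section3}. You instead argue the contrapositive by exhibiting an infinite matching of mutually maximally distant pairs (an infinite clique on $\{(e_0,u)\}$ when $U(R)$ is infinite; the pairs $(e_i,1),(e_i,v)$ when $\textnormal{Id}(R)$ is infinite), which is a nice complement because it shows the strong resolving graph itself has infinite vertex covering number; but it makes the theorem depend on Theorems~\ref{mmdclr}, \ref{mmdcl2r} and \ref{mmdclrwithid}, and on applying Theorem~\ref{vertexcover} to an infinite graph, whereas the cited result is stated and proved for finite graphs. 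That last point is only a citable caveat rather than a gap: you only need the inequality $\textnormal{sdim}(\Gamma)\ge\alpha(\Gamma_{SR})$, i.e.\ that every strong resolving set covers each mutually maximally distant pair, and the standard proof of that implication does not use finiteness (distances are finite here since the diameters are at most $3$). If you keep your approach, state and justify that one-line lemma explicitly instead of invoking the equality of Theorem~\ref{vertexcover}.
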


\begin{proof}
    It is clear that if $\textnormal{Cl}(R)$ is finite, then $\textnormal{sdim(Cl}(R))$ is finite. To prove the other side of the result, let $W = \{ x_1, x_2, \ldots , x_n \}$ be a finite strongly resolving set of  $\textnormal{Cl}(R)$. It implies that, for all $x,y \in V( \textnormal{Cl}(R))$, there exists $z \in W$ such that $z$ strongly resolves $x$, $y$ and so $d(x,z) \neq d(y,z)$. Let $D(x|W)$ be the sequence of distances of $x$ with the elements of $W$, i.e., $D(x|W) = (d(x,x_1), d(x,x_2), \ldots, d(x,x_n))$. For distinct $x,y \in V( \textnormal{Cl}(R)) \setminus W$ note that $D(x|W) \neq D(y|W)$. Since $\textnormal{diam}( \textnormal{Cl}(R)) \le 2$, there are at most $2^n$ choices for each $D(x|W)$. Hence, $|V( \textnormal{Cl}(R))| \le 2^n +n$. Thus, the result holds for $\textnormal{Cl}(R)$. Since $\textnormal{diam}( \textnormal{Cl}_2(R)) \le 3$, similarly we get the result for the graph $\textnormal{Cl}_2(R)$. 
\end{proof}

\begin{remark}
The graphs $\textnormal{Cl}(R)$ and $\textnormal{Cl}_2(R)$ are finite if and only if both the sets $\textnormal{Id}(R)$ and $U(R)$ are finite.   
\end{remark}
 
In what follows, we calculate the strong metric dimension of the clean graphs. For this, we assume $\textnormal{Cl}(R)$ and $\textnormal{Cl}_2(R)$ to be the finite graphs. Note that $|V(\textnormal{Cl}_2(R))| = (|\textnormal{Id}(R)|-1)|U(R)|$. By Theorems \ref{vertexcover}, \ref{Gallaithoerem} and \ref{independentsetcl2r}, one can calculate the strong metric dimension of the graph $\textnormal{Cl}_2(R)$. Using the fact $|V(\textnormal{Cl}(R))| = |V(\textnormal{Cl}_2(R))| + |U(R)|$ and Theorem \ref{relationclrcl2r}, we obtain the following theorem.

\begin{theorem}\label{sdimrelation}
Let $R$ be a commutative ring with some non-trivial idempotents such that $\textnormal{Cl}(R)$ is finite. Then the following holds.

    \begin{enumerate}[\rm(i)]
        \item If $|U(R)| = 1$, then $\textnormal{sdim(Cl}(R)) = \textnormal{sdim(Cl}_2(R)) + 1$.
        \item If $|U(R)| \ge 2$ and $U''(R) = \emptyset$, then $\textnormal{sdim(Cl}(R)) = \textnormal{sdim(Cl}_2(R)) + |U(R)| - 1$.
        \item If $|U''(R)| =2 $, then $\textnormal{sdim(Cl}(R)) = \textnormal{sdim(Cl}_2(R)) + |U(R)|$.
         \item If $|U''(R)| > 2$ and $\max \{2|\textnormal{Id}_{\perp}(R)^*|, |\textnormal{Id}(R)^*| \} = |\textnormal{Id}(R)^*|$, then $\textnormal{sdim(Cl}(R)) = \textnormal{sdim(Cl}_2(R)) + |U(R)|$.
        \item If $|U''(R)| > 2$ and $\max \{2|\textnormal{Id}_{\perp}(R)^*|, |\textnormal{Id}(R)^*| \} = 2|\textnormal{Id}_{\perp}(R)^*|$, then $\textnormal{sdim(Cl}(R)) = \textnormal{sdim(Cl}_2(R)) + |U(R)| + 1$.
        
    \end{enumerate}
\end{theorem}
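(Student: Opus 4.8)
The plan is to combine the two relations already established in the paper: the vertex-count identity $|V(\textnormal{Cl}(R))| = |V(\textnormal{Cl}_2(R))| + |U(R)|$ together with Theorem \ref{relationclrcl2r}, which relates $\beta(\textnormal{Cl}(R)_{SR})$ and $\beta(\textnormal{Cl}_2(R)_{SR})$ in each of the five cases. The bridge between independence numbers and the strong metric dimension is provided by Theorems \ref{vertexcover} and \ref{Gallaithoerem}. First I would record, for any connected graph $\Gamma$, the combined identity
\begin{equation}\label{sdimbeta}
\textnormal{sdim}(\Gamma) = \alpha(\Gamma_{SR}) = |V(\Gamma_{SR})| - \beta(\Gamma_{SR}),
\end{equation}
where the first equality is Theorem \ref{vertexcover} and the second is Theorem \ref{Gallaithoerem} applied to $\Gamma_{SR}$. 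By the "Moreover" clauses of Theorems \ref{clrsrnoidempotent}, \ref{graphgclr}, and part (iii) of Theorem \ref{graphgcl2r}, we have $V(\textnormal{Cl}(R)) = V(\textnormal{Cl}(R)_{SR})$ and $V(\textnormal{Cl}_2(R)) = V(\textnormal{Cl}_2(R)_{SR})$, so $|V(\textnormal{Cl}(R)_{SR})| = |V(\textnormal{Cl}(R))|$ and $|V(\textnormal{Cl}_2(R)_{SR})| = |V(\textnormal{Cl}_2(R))|$.

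Applying \eqref{sdimbeta} to both graphs and subtracting, I would then write
\begin{equation}\label{diffeq}
\textnormal{sdim}(\textnormal{Cl}(R)) - \textnormal{sdim}(\textnormal{Cl}_2(R)) = \bigl(|V(\textnormal{Cl}(R))| - |V(\textnormal{Cl}_2(R))|\bigr) - \bigl(\beta(\textnormal{Cl}(R)_{SR}) - \beta(\textnormal{Cl}_2(R)_{SR})\bigr).
\end{equation}
The first parenthesised term equals $|U(R)|$ by the stated vertex-count identity. The second parenthesised term is exactly what Theorem \ref{relationclrcl2r} computes in each case: it equals $1$, $-1$, $0$, $0$, and $+1$ in cases (i)--(v) respectively (reading off $\beta(\textnormal{Cl}(R)_{SR}) = \beta(\textnormal{Cl}_2(R)_{SR}) + c$ gives $\beta(\textnormal{Cl}(R)_{SR}) - \beta(\textnormal{Cl}_2(R)_{SR}) = c$). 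Substituting these constants into \eqref{diffeq} yields, case by case, the five stated formulas: for instance case (ii) gives $|U(R)| - 1$ and case (v) gives $|U(R)| - (-1) = |U(R)| + 1$.

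The verification is then purely a matter of matching the case labels of Theorem \ref{relationclrcl2r} against those of the statement being proved, so the argument is essentially a one-line substitution once \eqref{sdimbeta} and \eqref{diffeq} are in place. The only point requiring care — and the mildest obstacle — is confirming that $\textnormal{Cl}(R)$ and $\textnormal{Cl}_2(R)$ are connected so that Theorem \ref{vertexcover} applies, and that the boundary equals the whole vertex set so the vertex counts in \eqref{sdimbeta} are genuinely those of the clean graphs themselves rather than of proper subgraphs; both are already guaranteed by the "Moreover" statements cited above, and the hypothesis that $R$ has some non-trivial idempotents ensures $\textnormal{Cl}_2(R)$ is connected. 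In case (i), where $|U(R)| = 1$, both clean graphs are complete, their strong resolving graphs are complete, and the formula $\textnormal{sdim}(\textnormal{Cl}(R)) = \textnormal{sdim}(\textnormal{Cl}_2(R)) + 1$ can be checked directly against \eqref{diffeq} using $\beta(\textnormal{Cl}(R)_{SR}) = \beta(\textnormal{Cl}_2(R)_{SR}) = 1$ from Theorem \ref{relationclrcl2r}(i).
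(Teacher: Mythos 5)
Your proposal is correct and follows essentially the same route as the paper, which derives this theorem in one line from Theorem \ref{vertexcover}, Gallai's identity (Theorem \ref{Gallaithoerem}), the vertex-count identity $|V(\textnormal{Cl}(R))| = |V(\textnormal{Cl}_2(R))| + |U(R)|$, Theorem \ref{relationclrcl2r}, and the fact that the boundary equals the whole vertex set. One slip to fix: the values of $\beta(\textnormal{Cl}(R)_{SR}) - \beta(\textnormal{Cl}_2(R)_{SR})$ in cases (i)--(v) are $0, +1, 0, 0, -1$ rather than the list ``$1, -1, 0, 0, +1$'' you give, as your own worked examples for cases (ii) and (v) in fact confirm.
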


In the following theorem, we calculate the strong metric dimension of the graph $\textnormal{Cl}(R)$ when $R$ contains only trivial idempotents.

\begin{theorem}\label{sdimnoidempotemts}
 Let $R$ be a ring with no non-trivial idempotent. Then the following holds.

 \begin{enumerate}[\rm(i)]
     \item If $|U(R)| = 1$, then $\textnormal{sdim(Cl}(R)) = 1$.
     \item If $|U(R)| \ge 2$, then $\textnormal{sdim(Cl}(R)) = 2|U(R)|-2$.
 \end{enumerate}
\end{theorem}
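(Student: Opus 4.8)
The plan is to reduce the computation entirely to the structure theorem for $\textnormal{Cl}(R)_{SR}$ already obtained in Theorem \ref{clrsrnoidempotent}, combined with the vertex-cover characterization of the strong metric dimension in Theorem \ref{vertexcover}. For part (i), when $|U(R)| = 1$ I would invoke Theorem \ref{mmdclr}(i), which gives $\textnormal{Cl}(R) \cong K_2$. Since $K_2$ is connected with $(K_2)_{SR} \cong K_2$ and $\alpha(K_2) = 1$, Theorem \ref{vertexcover} yields $\textnormal{sdim}(\textnormal{Cl}(R)) = 1$ (equivalently, a single vertex strongly resolves the unique pair of vertices).

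For part (ii), before applying Theorem \ref{clrsrnoidempotent} I would first verify its two standing hypotheses, namely that $\textnormal{Cl}(R)$ is connected and that it is not complete. Connectedness is immediate because any vertex of the form $(0,u)$ satisfies $0 \cdot e = 0$ for every idempotent $e$, so $(0,u)$ is adjacent to all other vertices; in particular $(0,1)$ is a universal vertex. Non-completeness follows by exhibiting one non-adjacent pair: for $w \in U(R) \setminus \{1\}$, the vertices $(1,1)$ and $(1,w)$ are adjacent only if $1 \cdot w = 1$, i.e. $w = 1$, a contradiction, so $(1,1) \nsim (1,w)$ whenever $|U(R)| \ge 2$.

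With both hypotheses checked, Theorem \ref{clrsrnoidempotent} gives $\textnormal{Cl}(R)_{SR} = 2K_{|U(R)|}$, and Theorem \ref{vertexcover} reduces the problem to $\textnormal{sdim}(\textnormal{Cl}(R)) = \alpha\bigl(2K_{|U(R)|}\bigr)$. The last step is to evaluate this vertex covering number: a minimum vertex cover of $K_m$ has size $m-1$, and the covering number is additive over connected components, so $\alpha\bigl(2K_{|U(R)|}\bigr) = 2\bigl(|U(R)| - 1\bigr) = 2|U(R)| - 2$. Alternatively, one can reach the same value through Theorem \ref{Gallaithoerem}: since the vertices of $\textnormal{Cl}(R)$ are exactly the pairs $(e,u)$ with $e \in \{0,1\}$ and $u \in U(R)$, the order of $\textnormal{Cl}(R)_{SR} = \textnormal{Cl}(R)_{SR}$ is $2|U(R)|$, and a maximum independent set of $2K_{|U(R)|}$ selects one vertex from each of the two complete components, giving $\beta = 2$ and hence $\alpha = 2|U(R)| - 2$. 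There is no genuine obstacle in this argument; it is an assembly of the structure theorem with the vertex-cover formula, so the only care required is in confirming the connectedness and non-completeness hypotheses and in correctly computing $\alpha(2K_m)$.
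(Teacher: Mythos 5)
Your proposal is correct and follows essentially the same route as the paper: reduce to the structure theorem $\textnormal{Cl}(R)_{SR} = 2K_{|U(R)|}$ from Theorem \ref{clrsrnoidempotent} and then apply Theorem \ref{vertexcover} (the paper passes through $\beta(2K_{|U(R)|})=2$ and Theorem \ref{Gallaithoerem}, while you also compute $\alpha(2K_{|U(R)|})$ directly, which is the same calculation). Your explicit checks of connectedness and non-completeness, and your use of Theorem \ref{mmdclr}(i) for the $|U(R)|=1$ case, are minor tidy-ups rather than a different argument.
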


\begin{proof}
   First, we calculate the independence number of the graph $\textnormal{Cl}(R)_{SR}$. If $|U(R)| =1$, then by Theorem \ref{relationclrcl2r}, $\beta(\textnormal{Cl}(R)_{SR})=1$. Assume that $|U(R)| \ge 2$. By Theorem \ref{clrsrnoidempotent},
   \begin{equation}\label{eq6}
        \beta(\textnormal{Cl}(R)_{SR}) = \beta(2 K_{|U(R)|}) = 2.
   \end{equation}
     Note that $V(\textnormal{Cl}(R)_{SR}) = \{ (0, u) ~ | ~ u \in U(R) \} \cup \{ (1, u) ~ | ~ u \in U(R \}$. Consequently, $|V(\textnormal{Cl}(R)_{SR})| = 2 |U(R)|$. By Theorem \ref{vertexcover} and Theorem \ref{Gallaithoerem}, the result holds.
 \end{proof}

\subsection{Strong Metric Dimension of the Clean graphs of Artinian rings}\label{section5}

In this subsection, we study the strong metric dimension of the clean graphs of commutative Artinian rings. By the structural theorem \cite{atiyah1969introduction}, an Artinian non-local commutative ring $R$ is uniquely (up to isomorphism) a finite direct product of local rings $R_i$ that is $R \cong R_1 \times R_2 \times \cdots \times R_n$. Also, observe that $|\textnormal{Max}(R)| =n$, when $R \cong R_1 \times R_2 \times \cdots \times R_n$ $(n \ge 1)$. The following theorem determines the Artinian rings for which the clean graphs are finite.

\begin{theorem}
    Let $R$ be a commutative Artinian ring. Then the graphs $\textnormal{Cl}(R)$ and $\textnormal{Cl}_2(R)$ are finite if and only if the ring $R$ is finite. 
\end{theorem}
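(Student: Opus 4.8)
The plan is to dispatch the easy direction first: if $R$ is finite, then $\textnormal{Id}(R)$ and $U(R)$ are subsets of $R$ and hence finite, so both $\textnormal{Cl}(R)$ and $\textnormal{Cl}_2(R)$ have finite vertex sets and are therefore finite graphs. For the forward direction I would begin by invoking the Remark immediately preceding this theorem, which says that $\textnormal{Cl}(R)$ and $\textnormal{Cl}_2(R)$ are finite if and only if both $\textnormal{Id}(R)$ and $U(R)$ are finite. This reduces the claim to the purely ring-theoretic statement: if $R$ is Artinian with $U(R)$ finite, then $R$ is finite. Note that the hypothesis on $\textnormal{Id}(R)$ is automatic here, since by the structure theorem the idempotents of $R$ are exactly the tuples with coordinates in $\{0,1\}$, giving $|\textnormal{Id}(R)| = 2^n < \infty$; so the real content is extracting finiteness of $R$ from finiteness of $U(R)$.

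Next I would apply the structure theorem for commutative Artinian rings recalled at the start of this subsection, writing $R \cong R_1 \times R_2 \times \cdots \times R_n$ with each $(R_i, \mathcal{M}_i)$ local. Since units of a product are products of units, $U(R) \cong U(R_1) \times \cdots \times U(R_n)$, and each factor $U(R_i)$ is nonempty as it contains $1$. Consequently $|U(R)| = \prod_{i=1}^{n} |U(R_i)|$ with every factor at least $1$, so finiteness of $U(R)$ forces each $U(R_i)$ to be finite. This reduces everything to the local case: it suffices to show that a local ring $R_i$ with $U(R_i)$ finite is itself finite, for then $R = \prod_{i=1}^{n} R_i$ is a finite product of finite rings and hence finite.

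The heart of the argument, and the only step I expect to be nonroutine, is the local case. Let $(R_i, \mathcal{M}_i)$ be local. Because $R_i$ is local, an element is a unit precisely when it lies outside the unique maximal ideal, giving the disjoint decomposition $R_i = U(R_i) \sqcup \mathcal{M}_i$ and hence $|R_i| = |U(R_i)| + |\mathcal{M}_i|$. It remains to bound $|\mathcal{M}_i|$, and here I would use that $\mathcal{M}_i$ is the Jacobson radical of the local ring $R_i$, so that $1 + \mathcal{M}_i \subseteq U(R_i)$. The translation map $m \mapsto 1 + m$ is injective, so it embeds $\mathcal{M}_i$ into $U(R_i)$, whence $|\mathcal{M}_i| \le |U(R_i)|$. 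Combining the two observations yields $|R_i| \le 2\,|U(R_i)|$, which is finite as soon as $U(R_i)$ is finite. Assembling this over the $n$ local factors gives finiteness of $R$ and completes the proof.
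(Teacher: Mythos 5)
Your proposal is correct and follows essentially the same route as the paper: both reduce to the local factors via the Artinian structure theorem and the identity $|U(R)|=\prod_i |U(R_i)|$, and both conclude the local case from the containment $1+\mathcal{M}_i\subseteq U(R_i)$ together with $R_i=U(R_i)\sqcup\mathcal{M}_i$. Your explicit bound $|R_i|\le 2|U(R_i)|$ and the observation that $|\textnormal{Id}(R)|=2^n$ is automatic are just slightly more detailed renderings of the same argument.
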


\begin{proof}
     Let $R$ be a commutative Artinian ring. If $R$ is finite, then it is obvious that the graphs $\textnormal{Cl}(R)$ and $\textnormal{Cl}_2(R)$ are finite graphs. Now suppose that $\textnormal{Cl}(R)$ (or $\textnormal{Cl}_2(R)$) is a finite graph. It implies that the sets Id$(R)$ and $U(R)$ are finite. We know that an Artinian ring can be written as a finite product of local Artinian rings. It implies that $R=R_1 \times R_2 \times \cdots \times R_n$, where each $R_i$ is a local Artinian ring with maximal ideal $\mathcal{M}_i$. Assume that $n=1$ i.e., $R(=R_1)$ is a local ring. We know that for all $x \in M_1$, we have $1+x$ is a unit of $R_1$. Since $U(R_1)$ is finite, we have $\mathcal{M}_1$ is finite. It implies that $R_1$ is finite. Similarly, if $n \geq 2$, we know that $|U(R)|=|U(R_1)| \times|U(R_2)| \times \cdots \times |U(R_n)|$. Since $U(R)$ is finite, it gives us $U\left(R_i\right)$ is finite for each $i$. By the previous argument, we conclude that each $R_i$ is finite. Hence, $R$ is finite.
\end{proof}

By previous theorems, we conclude that for the strong metric dimension of clean graphs of an Artinian ring $R$ to be finite, we must have $R$ as a finite ring. Also, every finite ring is an Artinian ring. So, we assume $R$ to be a finite ring for the remaining part of the paper. Note that the local ring has only trivial idempotents. When $R$ is an Artinian local ring i.e., $|Max(R)|=1$, then the following theorem is the direct consequence of the Theorem \ref{sdimnoidempotemts}.

\begin{theorem}\label{sdimlocal}
 Let $R$ be a finite commutative local ring. Then the following holds.

 \begin{enumerate}[\rm(i)]
     \item If $|U(R)| = 1$, then $\textnormal{sdim(Cl}(R)) = 1$.
     \item If $|U(R)| \ge 2$, then $\textnormal{sdim(Cl}(R)) = 2|U(R)|-2$.
 \end{enumerate}
\end{theorem}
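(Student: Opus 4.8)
The plan is to observe that Theorem \ref{sdimlocal} is nothing more than Theorem \ref{sdimnoidempotemts} applied to the special case of a finite commutative \emph{local} ring. The key structural fact is that a local ring has only trivial idempotents: if $e$ is idempotent in a local ring $R$ with unique maximal ideal $\mathcal{M}$, then $e(1-e)=0$, so since $R/\mathcal{M}$ is a field, at least one of $e$ or $1-e$ lies in $\mathcal{M}$; but an idempotent lying in a maximal ideal must be $0$ (otherwise it would be a unit contained in a proper ideal), giving $e\in\{0,1\}$. Hence a finite local ring $R$ satisfies the hypothesis ``$R$ is a ring with no non-trivial idempotent'' required by Theorem \ref{sdimnoidempotemts}.

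Once this is established, the proof is immediate: invoke Theorem \ref{sdimnoidempotemts} verbatim. For the case $|U(R)|=1$ we obtain $\textnormal{sdim}(\textnormal{Cl}(R))=1$, and for $|U(R)|\ge 2$ we obtain $\textnormal{sdim}(\textnormal{Cl}(R))=2|U(R)|-2$, which are exactly the two claimed formulas. One should note for completeness that finiteness of $R$ guarantees $\textnormal{Cl}(R)$ is a finite graph (since both $\textnormal{Id}(R)$ and $U(R)$ are finite), so that the strong metric dimension is a well-defined finite quantity and Theorems \ref{vertexcover} and \ref{Gallaithoerem} underlying the earlier computation apply without issue.

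There is essentially no obstacle here; the only point requiring a word of care is the reduction that a finite commutative local ring has no non-trivial idempotents, and this is standard (indeed it is already implicitly acknowledged in the excerpt's remark preceding the statement that ``the local ring has only trivial idempotents''). Since that observation has already been recorded, the proof reduces to a single citation of Theorem \ref{sdimnoidempotemts}, and I would write it in one or two sentences, exactly as the author does with ``the following theorem is the direct consequence of Theorem \ref{sdimnoidempotemts}.''
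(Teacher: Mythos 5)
Your proposal is correct and matches the paper exactly: the paper presents this theorem as a direct consequence of Theorem \ref{sdimnoidempotemts}, justified by the observation that a local ring has only trivial idempotents. Your additional verification that idempotents in a local ring are trivial is sound and merely makes explicit what the paper takes as standard.
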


 We have the following corollary as the consequence of the Theorem \ref{sdimlocal}.

 \begin{corollary}
    Let $\mathbb{F}_q$ be a finite field of order $q$. Then the following holds.
     \begin{enumerate}[\rm(i)]
     \item If $q = 2$, then $\textnormal{sdim(Cl}(R)) = 1$.
     \item If $q \ge 3$, then $\textnormal{sdim(Cl}(R)) = 2q-4$.
 \end{enumerate}
    
\end{corollary}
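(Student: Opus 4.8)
The final statement is a corollary about finite fields $\mathbb{F}_q$, which are commutative local rings (indeed fields). The plan is to apply Theorem \ref{sdimlocal} directly, since a finite field satisfies all its hypotheses: $\mathbb{F}_q$ is a finite commutative ring with a unique maximal ideal, namely the zero ideal $\langle 0 \rangle$, so it is local and contains no non-trivial idempotents. Thus the only task is to translate the two cases of Theorem \ref{sdimlocal} into statements about $q$ by computing $|U(\mathbb{F}_q)|$.

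First I would recall that in any field $\mathbb{F}_q$ every non-zero element is a unit, so $U(\mathbb{F}_q) = \mathbb{F}_q \setminus \{0\}$ and hence $|U(\mathbb{F}_q)| = q - 1$. With this identity the corollary reduces to substituting $|U(R)| = q-1$ into Theorem \ref{sdimlocal}. For the first case, $|U(\mathbb{F}_q)| = 1$ is equivalent to $q - 1 = 1$, i.e. $q = 2$, and part (i) of Theorem \ref{sdimlocal} immediately gives $\textnormal{sdim}(\textnormal{Cl}(\mathbb{F}_q)) = 1$. For the second case, $|U(\mathbb{F}_q)| \ge 2$ is equivalent to $q - 1 \ge 2$, i.e. $q \ge 3$, and part (ii) of Theorem \ref{sdimlocal} yields
\[
\textnormal{sdim}(\textnormal{Cl}(\mathbb{F}_q)) = 2|U(\mathbb{F}_q)| - 2 = 2(q-1) - 2 = 2q - 4,
\]
which is exactly the claimed value.

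There is essentially no obstacle here: the entire content is the observation that a finite field is a finite commutative local ring with $|U(\mathbb{F}_q)| = q - 1$, after which the result is a verbatim specialization of the preceding theorem. The only point requiring a moment of care is the boundary between the two cases, confirming that $q = 2$ is the unique value making the unit group trivial (since $q$ is necessarily a prime power and the smallest such is $2$), and that every $q \ge 3$ falls under part (ii). I would state these two substitutions and cite Theorem \ref{sdimlocal}, keeping the argument to a single short paragraph.
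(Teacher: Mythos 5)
Your proposal is correct and matches the paper exactly: the corollary is stated there as an immediate consequence of Theorem \ref{sdimlocal}, obtained by noting that $\mathbb{F}_q$ is a finite commutative local ring with $|U(\mathbb{F}_q)| = q-1$ and substituting into the two cases. The arithmetic $2(q-1)-2 = 2q-4$ and the case split at $q=2$ versus $q\ge 3$ are precisely what the paper intends.
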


Now we investigate the strong metric dimension of a commutative Artinian ring $R$ when $|\textnormal{Max}(R)| \ge 2$. Thus, $R \cong R_1 \times R_2 \times \cdots \times R_n$ $(n \ge 2)$, where each $R_i$ is a local ring. Consequently, $|\textnormal{Id}(R)| = 2^n$. Therefore, $|V(\textnormal{Cl}(R))| = 2^n\prod_{i=1}^{n} |U(R_i)|$ and $|V(\textnormal{Cl}_2(R))| =  |V(\textnormal{Cl}(R))| - |U(R)| = $ $(2^n-1)\prod_{i=1}^{n} |U(R_i)|$. We begin with the following lemma.

\begin{lemma}\label{twoinvertible}
    Let $R \cong R_1 \times R_2 \times \cdots \times R_n$ $(n \ge2)$ be a non-local commutative Artinian ring with unity. Then $|U''(R)| =2$ if and only if $R_i \cong \mathbb{Z}_2$ for each $i \in \{2,3, \ldots,n \}$ and $R_1$ is isomorphic to one of the following four rings:
    \[ \mathbb{F}_4, ~ \mathbb{Z}_5, ~ \dfrac{\mathbb{Z}_2[x]}{\langle x^3 \rangle}, ~ \dfrac{\mathbb{Z}_4[x]}{\langle 2x, x^2-2\rangle}. \]
\end{lemma}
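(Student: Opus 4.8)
The plan is to push everything onto the unit group via Proposition \ref{twononselfinvertible}, which tells us that $|U''(R)| = 2$ holds precisely when $U(R)$ is cyclic of order $3$ or $4$. Since $R \cong R_1 \times \cdots \times R_n$ yields $U(R) \cong U(R_1) \times \cdots \times U(R_n)$ as groups, the statement becomes a classification of the ways a cyclic group of order $3$ or $4$ can arise as a product of unit groups of finite local rings.

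First I would record the elementary facts about a finite local ring $(R_i, \mathcal{M}_i)$ that drive the argument. Because $1 + \mathcal{M}_i \subseteq U(R_i)$, we have $|U(R_i)| \ge |\mathcal{M}_i|$; in particular $|U(R_i)| = 1$ forces $\mathcal{M}_i = \{0\}$, so $R_i$ is a field of order $2$, i.e. $R_i \cong \mathbb{Z}_2$. More generally $|U(R_i)| = |\mathcal{M}_i|\,(|R_i/\mathcal{M}_i| - 1)$, and since $|\mathcal{M}_i|$ and $|R_i/\mathcal{M}_i|$ are powers of a common prime, this identity both bounds the order of any factor with a prescribed number of units and rules out impossible factorizations.

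For the forward direction I would assume $|U''(R)| = 2$, so $\prod_i |U(R_i)| \in \{3,4\}$. If the product is the prime $3$, exactly one factor has $|U(R_i)| = 3$ and the rest satisfy $|U(R_i)| = 1$, hence are $\cong \mathbb{Z}_2$; the identity above forces the distinguished factor to be a field of order $4$, namely $\mathbb{F}_4$. If the product is $4$, then since $U(R) \cong \mathbb{Z}_4$ is cyclic we cannot have two factors of order $2$ (that would produce $\mathbb{Z}_2 \times \mathbb{Z}_2$, equivalently $U''(R) = \emptyset$ by Proposition \ref{selfinvertibleunits}); thus one factor has $U(R_i) \cong \mathbb{Z}_4$ and the rest are $\cong \mathbb{Z}_2$. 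The identity bounds the distinguished factor to order at most $8$, so it appears in Table \ref{localrings}, and I would compute the unit group of each local ring there with exactly four units to isolate those with cyclic $U$. Reordering the factors so the distinguished one is $R_1$ finishes this direction. The converse is immediate: for $R_1 \in \{\mathbb{F}_4, \mathbb{Z}_5, \mathbb{Z}_2[x]/\langle x^3\rangle, \mathbb{Z}_4[x]/\langle 2x, x^2-2\rangle\}$ and $R_i \cong \mathbb{Z}_2$ otherwise, $U(R) \cong U(R_1)$ is cyclic of order $3$ or $4$, so Proposition \ref{twononselfinvertible} gives $|U''(R)| = 2$.

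The main obstacle is the case analysis over the order-$8$ local rings: among $\mathbb{Z}_8$, $\mathbb{Z}_2[x]/\langle x^3\rangle$, $\mathbb{Z}_2[x,y]/\langle x^2,xy,y^2\rangle$, $\mathbb{Z}_4[x]/\langle 2x, x^2\rangle$ and $\mathbb{Z}_4[x]/\langle 2x, x^2-2\rangle$ (each with exactly four units), I must determine which carry $U \cong \mathbb{Z}_4$ rather than $\mathbb{Z}_2 \times \mathbb{Z}_2$. This reduces to exhibiting a unit of order $4$ in the two surviving rings, e.g. $(1+x)^2 = 1 + x^2 \neq 1$ in $\mathbb{Z}_2[x]/\langle x^3\rangle$ and $(1+x)^2 = 3 \neq 1$ in $\mathbb{Z}_4[x]/\langle 2x, x^2-2\rangle$, while verifying that every nonidentity unit squares to $1$ in the remaining three, so their unit groups are elementary abelian and hence excluded.
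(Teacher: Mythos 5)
Your proposal is correct and follows essentially the same route as the paper: reduce to $|U(R)| \in \{3,4\}$ via Proposition \ref{twononselfinvertible}, force all but one local factor to be $\mathbb{Z}_2$, bound the order of the distinguished factor (to $4$, resp.\ $5$ or $8$), and then inspect Table \ref{localrings} to isolate the order-$8$ local rings whose unit group is cyclic of order $4$. Your explicit use of $|U(R_i)| = |\mathcal{M}_i|(|R_i/\mathcal{M}_i|-1)$ and the order-$4$ element computations are just a more detailed rendering of the paper's size estimates and table check.
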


\begin{proof}
    First, let $|U''(R)| =2$. Then by Proposition \ref{twononselfinvertible}, we have either $|U(R)| =3$ or $|U(R)| =4$. Note that $|U(R)| = \prod_{i=1}^{n} |U(R_i)|$. First, assume that $|U(R)| =3$. Then there exist $i \in \{1,2, \ldots,n \}$ such that $|U(R_i)| =3$ and $|U(R_j)| =1$ for each $j \neq i$ and so $R_j \cong \mathbb{Z}_2$ for all $j \neq i$. Without loss of generality, assume that $|U(R_1)| =3$. If $|R_1| \ge 5$, then $|U(R_1)| \ge 4$, which is not possible. It implies that $|R_1|=4$. If $R_1$ is not a field, then $|U(R_1)|=2$ and so $U''(R) = \emptyset$. Consequently, $R_1$ is a field of order $4$.

    Next, assume that $|U(R)| =4$. Note that if $|U(R_i)|=2$ for some $i$, then $U''(R_i) = \emptyset$. Also, if $U''(R_i) = \emptyset$ for each $i$, then $U''(R) = \emptyset$. Therefore, there exist $i \in \{1,2, \ldots,n \}$ such that $|U(R_i)| =4$ and $|U(R_j)| =1$ for each $j \neq i$ and so $R_j \cong \mathbb{Z}_2$ for all $j \neq i$. Without loss of generality, assume that $|U(R_1)| =4$. Note that if $|R_1| \ge 9$, then $|U(R_1)| > 4$, which is not possible. It implies that either $R_1$ is a field of order $5$ or $R_1$ is a local ring of order $8$ such that $|U''(R_1)| =2$. By Table \ref{localrings}, we obtain that $R_1$ is isomorphic to one of the rings: $\mathbb{Z}_5$, $\dfrac{\mathbb{Z}_2[x]}{\langle x^3 \rangle}$, $\dfrac{\mathbb{Z}_4[x]}{\langle 2x, x^2-2\rangle}$.

    Converse is straightforward.
\end{proof}


\begin{lemma}\label{sdimofcl2r}
  Let $R \cong R_1 \times R_2 \times \cdots \times R_n$ $(n \ge2)$, where each $R_i$ is a local ring, be a non-local commutative Artinian ring and let $U''(R) = \emptyset$. Then 
  \begin{enumerate}[\rm(i)]
        \item If $|U(R)|=1$, then $\textnormal{sdim(Cl}_2(R)) = 2^n- 2$.
        \item If $|U(R)| \ge 2$, then $\textnormal{sdim(Cl}_2(R)) = (2^n-1)|U(R)| - 2^n +1$.
    \end{enumerate}
\end{lemma}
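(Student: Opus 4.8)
The plan is to reduce the computation of $\textnormal{sdim}(\textnormal{Cl}_2(R))$ to a counting problem using the two identities already available. By Theorem \ref{vertexcover}, $\textnormal{sdim}(\textnormal{Cl}_2(R)) = \alpha(\textnormal{Cl}_2(R)_{SR})$, and by Theorem \ref{Gallaithoerem} this equals $|V(\textnormal{Cl}_2(R)_{SR})| - \beta(\textnormal{Cl}_2(R)_{SR})$. Since Theorem \ref{graphgcl2r}(iii) guarantees $V(\textnormal{Cl}_2(R)_{SR}) = V(\textnormal{Cl}_2(R))$, once I know the order of $\textnormal{Cl}_2(R)$ and the independence number $\beta(\textnormal{Cl}_2(R)_{SR})$, the result follows by a single subtraction.

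First I would record the relevant cardinalities. Because $R \cong R_1 \times \cdots \times R_n$ with each $R_i$ local, every factor has only the trivial idempotents $0$ and $1$, so $\textnormal{Id}(R) = \textnormal{Id}(R_1) \times \cdots \times \textnormal{Id}(R_n)$ has size $2^n$; hence $|\textnormal{Id}(R)^*| = 2^n - 2$. Consequently the order of the strong resolving graph is $|V(\textnormal{Cl}_2(R))| = (|\textnormal{Id}(R)| - 1)|U(R)| = (2^n - 1)|U(R)|$.

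It then remains to insert the known independence numbers, splitting on $|U(R)|$. If $|U(R)| = 1$, then $\textnormal{Cl}_2(R) \cong K_{2^n - 1}$, so $\beta(\textnormal{Cl}_2(R)_{SR}) = 1$ (this is also Theorem \ref{relationclrcl2r}(i)), and Gallai gives $\alpha = (2^n - 1) - 1 = 2^n - 2$, establishing (i). If $|U(R)| \ge 2$, then the hypothesis $U''(R) = \emptyset$ lets me apply Theorem \ref{independentsetcl2r}(i), which yields $\beta(\textnormal{Cl}_2(R)_{SR}) = |\textnormal{Id}(R)^*| + 1 = 2^n - 1$. Substituting into $\textnormal{sdim} = |V| - \beta$ gives $(2^n - 1)|U(R)| - (2^n - 1) = (2^n - 1)|U(R)| - 2^n + 1$, which is exactly the formula claimed in (ii).

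I do not expect a genuine obstacle, since all the structural work—identifying the strong resolving graph and, crucially, computing its independence number—was already done in Theorems \ref{graphgcl2r} and \ref{independentsetcl2r}. The only points needing care are the bookkeeping ($|\textnormal{Id}(R)| = 2^n$ from the product decomposition, and the algebraic simplification $-(2^n-1) = -2^n + 1$) and the verification that for $|U(R)| \ge 2$ the graph $\textnormal{Cl}_2(R)$ is genuinely non-complete, so that the hypotheses underlying Theorem \ref{independentsetcl2r}(i) are met: two distinct vertices $(1,u)$ and $(1,v)$ are non-adjacent because $U''(R) = \emptyset$ forces $v^{-1} = v \neq u$, hence $uv \neq 1$.
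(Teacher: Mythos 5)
Your proposal is correct and follows essentially the same route as the paper: compute $|\textnormal{Id}(R)^*| = 2^n-2$ from the product decomposition, take $\beta(\textnormal{Cl}_2(R)_{SR})$ from Theorem \ref{relationclrcl2r} (for $|U(R)|=1$) and Theorem \ref{independentsetcl2r}(i) (for $|U(R)|\ge 2$), and conclude via Theorems \ref{vertexcover} and \ref{Gallaithoerem}. Your explicit check that $\textnormal{Cl}_2(R)$ is non-complete when $|U(R)|\ge 2$ is a small extra verification the paper leaves implicit.
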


\begin{proof}
 Let $R \cong R_1 \times R_2 \times \cdots \times R_n$ $(n \ge2)$ be a non-local commutative Artinian ring, where each $R_i$ is a local ring. We know that the local ring has only $\{0,1 \}$ as the set of idempotent elements. Thus, each idempotent element of $R$ is of the form $(a_1, a_2, \ldots, a_n)$ such that $a_i \in \{0,1\}$ for each $1 \le i \le n$. Consequently, $|\textnormal{Id}(R)^*| = 2^n -2$. If $|U(R)| =1$, then by Theorem \ref{relationclrcl2r}, we have $\beta(\textnormal{Cl}_2(R)_{SR}) = 1$. Assume that $U''(R) = \emptyset$. By Theorem \ref{independentsetcl2r}, we have $\beta(\textnormal{Cl}_2(R)_{SR}) = |\textnormal{Id}(R)^*| +1 = 2^n -1$. Thus, by Theorem \ref{vertexcover} and Theorem \ref{Gallaithoerem}, the result holds.  
\end{proof}

 The following corollary calculates the strong metric dimension of the graph $\textnormal{Cl}_2(R)$ when $R$ is an Artinian reduced ring i.e., $R$ is a direct product of finite fields. Note that when $R \cong F_1 \times F_2 \times \cdots \times F_n$ $(n \ge2)$, then $|\textnormal{Id}(R)| = 2^n$ and $|U(R)| = \prod_{i=1}^{n} (|F_i|-1)$. Thus,  $|V(\textnormal{Cl}(R))| = 2^n\prod_{i=1}^{n} (|F_i|-1)$ and $|V(\textnormal{Cl}_2(R))| =  |V(\textnormal{Cl}(R))| - |U(R)| = (2^n-1)\prod_{i=1}^{n} (|F_i|-1)$.
\begin{corollary}
    Let $R \cong F_1 \times F_2 \times \cdots \times F_n$ $(n \ge2)$, where each $F_i$ is a field. Then the following holds
    \begin{enumerate}[\rm(i)]
        \item If $F_i \cong \mathbb{Z}_2$ for all $ 1 \le i \le n$, then $\textnormal{sdim(Cl}_2(R)) = 2^n-2$.
        \item If $F_i$ is either $\mathbb{Z}_2$ or $\mathbb{Z}_3$ such that $R \ncong \mathbb{Z}_2 \times \mathbb{Z}_2 \times \cdots \times \mathbb{Z}_2 $, then $\textnormal{sdim(Cl}_2(R)) = (2^n-1)\prod_{i=1}^{n} (|F_i|-1) - 2^n +1$.
    \end{enumerate}
\end{corollary}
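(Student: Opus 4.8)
The plan is to read this off directly from Lemma~\ref{sdimofcl2r}, since a product of fields is in particular a product of local rings; the only real work is to confirm the two hypotheses of that lemma---that $R$ is a non-local commutative Artinian ring expressible as a product of local rings with $U''(R)=\emptyset$---and to identify into which of its two cases we fall. First I would observe that each field $F_i$ is a local ring (its unique maximal ideal being $\{0\}$), so $R\cong F_1\times\cdots\times F_n$ is a finite product of local rings, and it is non-local because $n\ge 2$. Since a field has only the trivial idempotents $0$ and $1$, the idempotents of $R$ are exactly the $n$-tuples with entries in $\{0,1\}$, giving $|\textnormal{Id}(R)|=2^n$; moreover $|U(R)|=\prod_{i=1}^{n}|U(F_i)|=\prod_{i=1}^{n}(|F_i|-1)$.

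For part~(i), every $F_i\cong\mathbb{Z}_2$ forces $|U(F_i)|=1$, hence $|U(R)|=1$ and trivially $U''(R)=\emptyset$. Thus the hypotheses of Lemma~\ref{sdimofcl2r} hold and we land in its case~(i), which yields $\textnormal{sdim}(\textnormal{Cl}_2(R))=2^n-2$.

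For part~(ii), each $F_i$ is $\mathbb{Z}_2$ or $\mathbb{Z}_3$, so $U(F_i)$ is either trivial or isomorphic to $\mathbb{Z}_2$; consequently $U(R)$ is a direct product of copies of $\mathbb{Z}_2$ (together with trivial factors), and Proposition~\ref{selfinvertibleunits} gives $U''(R)=\emptyset$. Because $R\ncong\mathbb{Z}_2\times\cdots\times\mathbb{Z}_2$, at least one factor is $\mathbb{Z}_3$, so $|U(R)|=\prod_{i=1}^{n}(|F_i|-1)\ge 2$ and we are in case~(ii) of Lemma~\ref{sdimofcl2r}. Substituting $|U(R)|=\prod_{i=1}^{n}(|F_i|-1)$ into the formula $(2^n-1)|U(R)|-2^n+1$ produces exactly $(2^n-1)\prod_{i=1}^{n}(|F_i|-1)-2^n+1$, as claimed.

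The argument is entirely a matter of hypothesis-checking: there is no new combinatorial obstacle, since the structural content has already been carried by Lemma~\ref{sdimofcl2r}. The one point requiring (routine) care is the verification that $U''(R)=\emptyset$ in case~(ii), for which one must note that every unit of a product of copies of $\mathbb{Z}_2$ and $\mathbb{Z}_3$ squares to the identity, so that invoking Proposition~\ref{selfinvertibleunits} is legitimate; everything else is a direct substitution.
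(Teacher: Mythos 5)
Your proposal is correct and follows essentially the same route as the paper: verify $U''(R)=\emptyset$ via Proposition~\ref{selfinvertibleunits} and then specialize Lemma~\ref{sdimofcl2r} according to whether $|U(R)|=1$ or $|U(R)|\ge 2$. If anything, your write-up is cleaner than the paper's, which cites Theorem~\ref{betaofcl2r} (the $U''(R)\neq\emptyset$ case) where Lemma~\ref{sdimofcl2r} is clearly intended.
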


\begin{proof}
    Let $R \cong  F_1 \times F_2 \times \cdots \times F_n$ $(n \ge2)$, where each $F_i$ is a field. Note that $U''(R) = \emptyset$ if and only if $U''(F_i) = \emptyset$. Thus, by the proposition \ref{selfinvertibleunits}, $U''(R) = \emptyset$ if and only if each $F_i$ is isomorphic to either $\mathbb{Z}_2$ or $\mathbb{Z}_3$. Hence, by the Theorem \ref{betaofcl2r}, the result holds. 
\end{proof}

\begin{theorem}\label{betaofcl2r}
    Let $R \cong R_1 \times R_2 \times \cdots \times R_n$ $(n \ge2)$, where each $R_i$ is a local ring, be a non-local commutative Artinian ring and let $U''(R) \neq \emptyset$. Then
    \begin{enumerate}[\rm(i)]
        \item If $n \ge 3$, then $\textnormal{sdim(Cl}_2(R)) = (2^n-1)|U(R)| - 2^n$. 
        \item If $n=2$ and $R$ is isomorphic to one of the rings: $\mathbb{F}_4 \times \mathbb{Z}_2$, $\mathbb{Z}_5 \times \mathbb{Z}_2$, $\dfrac{\mathbb{Z}_2[x]}{\langle x^3 \rangle} \times \mathbb{Z}_2$, $\dfrac{\mathbb{Z}_4[x]}{\langle 2x, x^2-2\rangle} \times \mathbb{Z}_2$, then $\textnormal{sdim(Cl}_2(R)) = 3|U(R)| - 5$. 
        \item If $n=2$ and $R$ is not isomorphic to the rings given in {\rm(ii)}, then $\textnormal{sdim(Cl}_2(R)) = 3|U(R)| - 6$.

    \end{enumerate}
\end{theorem}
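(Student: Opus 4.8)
The plan is to reduce the whole computation to the independence number $\beta(\textnormal{Cl}_2(R)_{SR})$ already determined in Theorem \ref{independentsetcl2r}. By Theorem \ref{vertexcover} we have $\textnormal{sdim}(\textnormal{Cl}_2(R)) = \alpha(\textnormal{Cl}_2(R)_{SR})$, and since $V(\textnormal{Cl}_2(R)_{SR}) = V(\textnormal{Cl}_2(R))$ by part (iii) of Theorem \ref{graphgcl2r}, Gallai's identity (Theorem \ref{Gallaithoerem}) gives $\textnormal{sdim}(\textnormal{Cl}_2(R)) = |V(\textnormal{Cl}_2(R))| - \beta(\textnormal{Cl}_2(R)_{SR})$. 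As recorded just before the statement, $|V(\textnormal{Cl}_2(R))| = (2^n-1)|U(R)|$, so everything comes down to evaluating $\beta(\textnormal{Cl}_2(R)_{SR})$ in each case.

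First I would compute the two arithmetic inputs that Theorem \ref{independentsetcl2r} needs. Because each $R_i$ is local, $\textnormal{Id}(R_i) = \{0,1\}$, so the idempotents of $R$ are exactly the $0/1$ tuples; hence $|\textnormal{Id}(R)| = 2^n$ and $|\textnormal{Id}(R)^*| = 2^n - 2$. For the orthogonal idempotents, two idempotents $e$ and $f$ satisfy $ef = 0$ iff their supports are disjoint, so a maximum pairwise-orthogonal family of nonzero idempotents is given by the $n$ primitive idempotents $e_1, \ldots, e_n$; thus $|\textnormal{Id}_{\perp}(R)^*| = n$. Comparing $2|\textnormal{Id}_{\perp}(R)^*| = 2n$ with $|\textnormal{Id}(R)^*| = 2^n - 2$, one finds $\max\{2n, 2^n - 2\} = 2^n - 2$ whenever $n \ge 3$ (the two terms being equal exactly at $n = 3$), while for $n = 2$ it equals $4 = 2n = 2|\textnormal{Id}_{\perp}(R)^*|$.

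For part (i) ($n \ge 3$) I would split on $|U''(R)|$, which is nonempty by hypothesis. If $|U''(R)| > 2$, Theorem \ref{independentsetcl2r}(iv) gives $\beta(\textnormal{Cl}_2(R)_{SR}) = \max\{2n, 2^n - 2\} + 2 = (2^n - 2) + 2 = 2^n$. If $|U''(R)| = 2$, then since $\max\{2n, 2^n - 2\} = |\textnormal{Id}(R)^*|$ for $n \ge 3$ (using the remark following Theorem \ref{independentsetcl2r} to settle the tie at $n = 3$), part (ii) of that theorem gives $\beta(\textnormal{Cl}_2(R)_{SR}) = |\textnormal{Id}(R)^*| + 2 = 2^n$. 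Either way $\beta = 2^n$, and substituting into the displayed reduction yields $\textnormal{sdim}(\textnormal{Cl}_2(R)) = (2^n - 1)|U(R)| - 2^n$.

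For the two $n = 2$ cases I would invoke Lemma \ref{twoinvertible}: specialized to $n = 2$ it says $|U''(R)| = 2$ precisely when $R_2 \cong \mathbb{Z}_2$ and $R_1 \in \{\mathbb{F}_4, \mathbb{Z}_5, \mathbb{Z}_2[x]/\langle x^3\rangle, \mathbb{Z}_4[x]/\langle 2x, x^2 - 2\rangle\}$, i.e.\ exactly the four rings of part (ii). For those, $\max = 2|\textnormal{Id}_{\perp}(R)^*| = 4$, so Theorem \ref{independentsetcl2r}(iii) gives $\beta = 2|\textnormal{Id}_{\perp}(R)^*| + 1 = 5$, whence $\textnormal{sdim}(\textnormal{Cl}_2(R)) = 3|U(R)| - 5$. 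For every other $n = 2$ ring (part (iii)), the assumption $U''(R) \neq \emptyset$ together with the exclusion of those four rings forces $|U''(R)| > 2$, so Theorem \ref{independentsetcl2r}(iv) gives $\beta = \max + 2 = 6$ and $\textnormal{sdim}(\textnormal{Cl}_2(R)) = 3|U(R)| - 6$. The only delicate point I anticipate is the boundary $n = 3$, where $2|\textnormal{Id}_{\perp}(R)^*| = |\textnormal{Id}(R)^*|$ and one must appeal to the remark to pin down $\beta$; everywhere else the two terms in the maximum are strictly ordered and the case split is routine.
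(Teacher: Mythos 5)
Your proposal is correct and follows essentially the same route as the paper: compute $|\textnormal{Id}(R)^*| = 2^n-2$ and $|\textnormal{Id}_{\perp}(R)^*| = n$, feed these into Theorem \ref{independentsetcl2r} (splitting on $|U''(R)|$ via Lemma \ref{twoinvertible} and handling the $n=3$ tie through the remark), and convert $\beta(\textnormal{Cl}_2(R)_{SR})$ to $\textnormal{sdim}(\textnormal{Cl}_2(R))$ using Theorems \ref{vertexcover} and \ref{Gallaithoerem} together with $|V(\textnormal{Cl}_2(R))| = (2^n-1)|U(R)|$. Your write-up is in fact slightly more explicit than the paper's about why $\beta = 2^n$ in both subcases $|U''(R)|=2$ and $|U''(R)|>2$ when $n \ge 3$.
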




\begin{proof}
Let $R$ be a commutative Artinian ring such that $|\textnormal{Max}(R)| \ge 2$. Then  $R \cong R_1 \times R_2 \times \cdots \times R_n$ $(n \ge2)$, where each $R_i$ is a local ring. By proof of the Lemma \ref{sdimofcl2r}, $|\textnormal{Id}(R)^*| = 2^n -2$. Also, by the Pigeonhole principle, the set of nontrivial orthogonal idempotents of maximum size is 
\[ 
\textnormal{Id}_{\perp}(R)^* = \{ e_i = (a_1, a_2, \ldots, a_{i-1}, a_i, a_{i+1}, \ldots, a_n) ~|~ a_i = 1 ~ \text{and} ~ a_j = 0 ~ \text{for} ~ j \neq i \}. 
\]
Consequently, $|\textnormal{Id}_{\perp}(R)^*| =n$. Let $U''(R) \neq \emptyset$. Notice that $\max\{2|\textnormal{Id}_{\perp}(R)^*|, |\textnormal{Id}(R)^*| \} = \max\{2n, 2^n -2 \} = 2n$ if and only if $n=2,3$. Also, note that for $n=3$, $2|\textnormal{Id}_{\perp}(R)^*|= |\textnormal{Id}(R)^*|$. Thus, for $n=2$, Theorem \ref{independentsetcl2r} implies that $\beta(\textnormal{Cl}_2(R)_{SR}) =5$ when $|U''(R)| = 2$ and $\beta(\textnormal{Cl}_2(R)_{SR}) = 6$ when $|U''(R)|  >2$. Moreover, for $n\ge 3$, we have $\beta(\textnormal{Cl}_2(R)_{SR}) = 2^n$. By Lemma \ref{twoinvertible} and Theorems \ref{vertexcover}, \ref{Gallaithoerem}, the result holds.
\end{proof}

\begin{corollary}\label{reducedcl2r}
    Let $R \cong F_1 \times F_2 \times \cdots \times F_n$ $(n \ge2)$, where each $F_i$ is a field and $|F_i| \ge 4$ for some $i$. Then the following holds
    \begin{enumerate}[\rm(i)]
        \item If $R$ is isomorphic to either $ \mathbb{Z}_2 \times \mathbb{F}_4$ or $\mathbb{Z}_2 \times \mathbb{Z}_5$, then $\textnormal{sdim(Cl}_2(R)) = 3\prod_{i=1}^{n} (|F_i|-1) -  5$.
         \item If $n=2$ and $R$ is not isomorphic the rings given in {\rm(i)}, then $\textnormal{sdim(Cl}_2(R)) = (2^n-1)\prod_{i=1}^{n} (|F_i|-1) - 6$.
        \item If $n \ge 3$, then $\textnormal{sdim(Cl}_2(R)) = (2^n-1)\prod_{i=1}^{n} (|F_i|-1) - 2^n$.
    \end{enumerate}
\end{corollary}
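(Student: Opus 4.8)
The plan is to derive this corollary as the specialization of Theorem \ref{betaofcl2r} to the case in which every local factor $R_i$ is a field $F_i$. The first step is to record the elementary arithmetic for this situation: since $U(F_i) = F_i \setminus \{0\}$, we have $|U(F_i)| = |F_i| - 1$ and hence $|U(R)| = \prod_{i=1}^{n}(|F_i| - 1)$. Substituting this value of $|U(R)|$ into the three formulas of Theorem \ref{betaofcl2r}, and using $2^n - 1 = 3$ when $n = 2$, produces exactly the right-hand sides claimed in (i)--(iii); so the real content is to verify the hypothesis of Theorem \ref{betaofcl2r} and to match its case split with ours.

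The key preliminary observation is that the hypothesis that $|F_i| \ge 4$ for some $i$ is equivalent to $U''(R) \neq \emptyset$, which is precisely the standing assumption of Theorem \ref{betaofcl2r}. Indeed, $U(F_i)$ is cyclic of order $|F_i| - 1$, so $U''(F_i) = \emptyset$ if and only if $|F_i| \le 3$, that is, $F_i \cong \mathbb{Z}_2$ or $F_i \cong \mathbb{Z}_3$; and, exactly as in the preceding corollary, Proposition \ref{selfinvertibleunits} applied factorwise gives $U''(R) = \emptyset$ if and only if $U''(F_i) = \emptyset$ for every $i$. Thus a single factor of order at least $4$ forces $U''(R) \neq \emptyset$, so Theorem \ref{betaofcl2r} is available throughout.

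It then remains to align the two case splits. For $n \ge 3$, part (i) of Theorem \ref{betaofcl2r} yields $\textnormal{sdim(Cl}_2(R)) = (2^n - 1)|U(R)| - 2^n$, which becomes part (iii) after substitution. For $n = 2$ I would appeal to Lemma \ref{twoinvertible}: among the four rings listed in Theorem \ref{betaofcl2r}(ii), only $\mathbb{F}_4 \times \mathbb{Z}_2$ and $\mathbb{Z}_5 \times \mathbb{Z}_2$ are products of fields, since $\dfrac{\mathbb{Z}_2[x]}{\langle x^3 \rangle}$ and $\dfrac{\mathbb{Z}_4[x]}{\langle 2x, x^2-2\rangle}$ are local but not fields. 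Consequently a product of two fields lies in Theorem \ref{betaofcl2r}(ii) exactly when $R \cong \mathbb{Z}_2 \times \mathbb{F}_4$ or $R \cong \mathbb{Z}_2 \times \mathbb{Z}_5$, giving $\textnormal{sdim(Cl}_2(R)) = 3|U(R)| - 5$ and hence part (i); every remaining product of two fields with a factor of order at least $4$ falls into Theorem \ref{betaofcl2r}(iii), giving $3|U(R)| - 6$ and hence part (ii).

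The argument is essentially bookkeeping once the equivalence between ``some $|F_i| \ge 4$'' and $U''(R) \neq \emptyset$ has been established. The only point that requires genuine attention---and the mild obstacle---is correctly sifting the list of rings in Theorem \ref{betaofcl2r}(ii) down to those that are products of fields, which Lemma \ref{twoinvertible} settles by exhibiting the four possibilities explicitly.
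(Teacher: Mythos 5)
Your proposal is correct and matches the paper's intent exactly: the corollary is stated without proof as an immediate specialization of Theorem \ref{betaofcl2r}, using $|U(R)|=\prod_{i=1}^{n}(|F_i|-1)$, the equivalence of ``some $|F_i|\ge 4$'' with $U''(R)\neq\emptyset$, and the observation that only $\mathbb{F}_4\times\mathbb{Z}_2$ and $\mathbb{Z}_5\times\mathbb{Z}_2$ among the four rings of Lemma \ref{twoinvertible} are products of fields. No gaps.
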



The following theorem is the consequence of the Theorem \ref{sdimrelation}, Lemma \ref{sdimofcl2r} and Theorem \ref{betaofcl2r}.


\begin{theorem}
    Let $R \cong R_1 \times R_2 \times \cdots \times R_n$ $(n \ge2)$, where each $R_i$ is a local ring, be a non-local commutative Artinian ring. Then the following holds
    \begin{enumerate}[\rm(i)]
        \item If $|U(R)|=1$, then $\textnormal{sdim(Cl}(R)) = 2^n - 1$.
        \item If $|U(R)| \ge 2$ and $n=2$, then $\textnormal{sdim(Cl}(R)) = 4|U(R)| - 5$.
        \item If $|U(R)| \ge 2$ and $n \ge 3$, then $\textnormal{sdim(Cl}(R)) = 2^n|U(R)| - 2^n$.
        
    \end{enumerate}
\end{theorem}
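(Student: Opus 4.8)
The plan is to reduce the computation to the already-determined value of $\textnormal{sdim}(\textnormal{Cl}_2(R))$ and then pass to $\textnormal{Cl}(R)$ via Theorem~\ref{sdimrelation}. Since $R\cong R_1\times\cdots\times R_n$ with each $R_i$ local, every idempotent of $R$ is a $\{0,1\}$-tuple, so $\textnormal{Id}(R)\cong\{0,1\}^n$; this gives $|\textnormal{Id}(R)|=2^n$, $|\textnormal{Id}(R)^*|=2^n-2$ and, as in the proof of Theorem~\ref{betaofcl2r}, $|\textnormal{Id}_{\perp}(R)^*|=n$. As $n\ge 2$, the ring has non-trivial idempotents, so Theorem~\ref{sdimrelation} applies, and the whole argument reduces to inserting the correct value of $\textnormal{sdim}(\textnormal{Cl}_2(R))$ — from Lemma~\ref{sdimofcl2r} when $U''(R)=\emptyset$ and from Theorem~\ref{betaofcl2r} when $U''(R)\neq\emptyset$ — into the matching clause of Theorem~\ref{sdimrelation} and simplifying.

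For part (i), $|U(R)|=1$ forces $U''(R)=\emptyset$, so Lemma~\ref{sdimofcl2r}(i) gives $\textnormal{sdim}(\textnormal{Cl}_2(R))=2^n-2$, and Theorem~\ref{sdimrelation}(i) adds $1$, yielding $2^n-1$. For the remaining parts I take $|U(R)|\ge 2$ and split on $U''(R)$. If $U''(R)=\emptyset$, I combine $\textnormal{sdim}(\textnormal{Cl}_2(R))=(2^n-1)|U(R)|-2^n+1$ from Lemma~\ref{sdimofcl2r}(ii) with the increment $|U(R)|-1$ from Theorem~\ref{sdimrelation}(ii); the constants cancel and the answer collapses to $2^n|U(R)|-2^n$. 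If $U''(R)\neq\emptyset$ and $n\ge 3$, Theorem~\ref{betaofcl2r}(i) gives $\textnormal{sdim}(\textnormal{Cl}_2(R))=(2^n-1)|U(R)|-2^n$; here $\max\{2n,2^n-2\}=2^n-2=|\textnormal{Id}(R)^*|$ (strictly for $n\ge 4$, with equality at $n=3$), so clause (iv) of Theorem~\ref{sdimrelation} adds $|U(R)|$, again giving $2^n|U(R)|-2^n$. Thus both $U''(R)$-branches agree for $n\ge 3$, proving (iii).

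It remains to treat $n=2$ with $U''(R)\neq\emptyset$, which is where part (ii) lives. Using Lemma~\ref{twoinvertible} I separate the case $|U''(R)|=2$, realized exactly by the four product rings listed there: then Theorem~\ref{betaofcl2r}(ii) gives $3|U(R)|-5$ and Theorem~\ref{sdimrelation}(iii) adds $|U(R)|$. In the complementary case $|U''(R)|>2$, Theorem~\ref{betaofcl2r}(iii) gives $3|U(R)|-6$, and since $\max\{2|\textnormal{Id}_{\perp}(R)^*|,|\textnormal{Id}(R)^*|\}=\max\{4,2\}=2|\textnormal{Id}_{\perp}(R)^*|$, clause (v) of Theorem~\ref{sdimrelation} adds $|U(R)|+1$. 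Both routes produce $4|U(R)|-5$, establishing (ii).

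The main obstacle is not any single hard estimate but the careful bookkeeping of routing each regime of $U''(R)$ to the correct clause of Theorem~\ref{sdimrelation} and confirming that the subcases genuinely collapse to the three stated formulas. The two points needing the most care are the boundary $n=3$, where $2n=2^n-2$ makes the hypotheses of clauses (iv) and (v) coincide numerically and one must verify — by exhibiting a maximum independent set of $G$ of the form $I_u\cup\{(1,u)\}$ with $u\in U'(R)$, as in the proof of Theorem~\ref{relationclrcl2r} — that it is clause (iv) that governs; and the exact enumeration via Lemma~\ref{twoinvertible} of the $n=2$ rings with $|U''(R)|=2$. I would also confirm that part (ii) is being applied in the intended regime $U''(R)\neq\emptyset$, since the $n=2$ subcase $U''(R)=\emptyset$ (e.g.\ $\mathbb{Z}_3\times\mathbb{Z}_2$) merges instead into the pattern $2^n|U(R)|-2^n=4|U(R)|-4$.
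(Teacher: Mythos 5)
Your proposal follows exactly the route the paper intends: the paper offers no written proof beyond declaring the theorem a consequence of Theorem~\ref{sdimrelation}, Lemma~\ref{sdimofcl2r} and Theorem~\ref{betaofcl2r}, and your case-by-case bookkeeping for parts (i) and (iii), and for part (ii) in the regime $U''(R)\neq\emptyset$, is correct. The caveat you raise at the end is not a defect of your argument but a genuine defect of the printed statement: for $n=2$, $|U(R)|\ge 2$ and $U''(R)=\emptyset$ (e.g.\ $R\cong\mathbb{Z}_2\times\mathbb{Z}_3$ or $\mathbb{Z}_3\times\mathbb{Z}_3$), Lemma~\ref{sdimofcl2r}(ii) together with Theorem~\ref{sdimrelation}(ii) gives $\bigl(3|U(R)|-3\bigr)+\bigl(|U(R)|-1\bigr)=4|U(R)|-4$, not $4|U(R)|-5$, and a direct check for $\mathbb{Z}_2\times\mathbb{Z}_3$ confirms $\textnormal{sdim}(\textnormal{Cl}(R))=4$ rather than $3$. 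So part (ii) as stated is valid only under the additional hypothesis $U''(R)\neq\emptyset$ (otherwise the $n=2$ case merges into the $4|U(R)|-4$ pattern of part (iii)); your derivation, which isolates exactly this split, is the correct one.
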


        


\begin{corollary}\label{reducedclr}
    Let $R \cong F_1 \times F_2 \times \cdots \times F_n$ $(n \ge2)$, where each $F_i$ is a field. Then the following holds
    \begin{enumerate}[\rm(i)]
        \item If $F_i = \mathbb{Z}_2$ for all $ 1 \le i \le n$, then $\textnormal{sdim(Cl}(R)) = 2^n - 1$.
        \item If $|F_i| \ge 3$ for some $i$. Then 
        \begin{enumerate}[\rm(a)]
            \item If $n =2$, then $\textnormal{sdim(Cl}(R)) = 4\prod_{i=1}^{n} (|F_i|-1) - 5$.
            \item If $n \ge 3$, then $\textnormal{sdim(Cl}(R)) = 2^n\prod_{i=1}^{n} (|F_i|-1) - 2^n$.
            \end{enumerate}
    \end{enumerate}
\end{corollary}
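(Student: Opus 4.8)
The plan is to deduce this corollary directly from the immediately preceding theorem by translating its case distinction on $|U(R)|$ into the stated conditions on the fields $F_i$. Since $R \cong F_1 \times \cdots \times F_n$ with each $F_i$ a field is in particular a non-local commutative Artinian ring (each field is a local ring), the theorem applies verbatim, and the only work is a bookkeeping computation of $|U(R)|$.

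First I would compute the unit group. Because each factor $F_i$ is a field, its group of units is $U(F_i) = F_i \setminus \{0\}$, so $|U(F_i)| = |F_i| - 1$. As the units of a finite direct product are precisely the tuples whose coordinates are units, we have $U(R) \cong U(F_1) \times \cdots \times U(F_n)$ and hence $|U(R)| = \prod_{i=1}^{n} (|F_i| - 1)$.

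Next I would match the hypotheses to the theorem's cases. For part (i), the assumption $F_i \cong \mathbb{Z}_2$ for every $i$ forces $|F_i| - 1 = 1$ for each $i$, whence $|U(R)| = 1$; part (i) of the preceding theorem then gives $\textnormal{sdim(Cl}(R)) = 2^n - 1$. For part (ii), the assumption $|F_j| \ge 3$ for some $j$ gives $|F_j| - 1 \ge 2$, so $|U(R)| \ge 2$. When $n = 2$, part (ii) of the theorem yields $\textnormal{sdim(Cl}(R)) = 4|U(R)| - 5 = 4\prod_{i=1}^{n} (|F_i| - 1) - 5$, proving (ii)(a); when $n \ge 3$, part (iii) yields $\textnormal{sdim(Cl}(R)) = 2^n|U(R)| - 2^n = 2^n\prod_{i=1}^{n} (|F_i| - 1) - 2^n$, proving (ii)(b).

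There is essentially no obstacle here; the one point worth checking is that the two families of hypotheses are exhaustive and correspond exactly to the theorem's dichotomy $|U(R)| = 1$ versus $|U(R)| \ge 2$. This holds because a finite field has order $2$ precisely when it is isomorphic to $\mathbb{Z}_2$, so the conditions ``$F_i \cong \mathbb{Z}_2$ for all $i$'' and ``$|F_j| \ge 3$ for some $j$'' partition all possibilities and match $|U(R)| = 1$ and $|U(R)| \ge 2$, respectively. Hence the corollary follows at once.
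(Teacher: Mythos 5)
Your proposal is correct and is exactly the paper's intended derivation: the corollary is stated as an immediate specialization of the preceding theorem, obtained by noting that each field is local, so the theorem applies, and that $|U(R)| = \prod_{i=1}^{n}(|F_i|-1)$, which translates the dichotomy $|U(R)|=1$ versus $|U(R)|\ge 2$ into ``all $F_i \cong \mathbb{Z}_2$'' versus ``some $|F_i|\ge 3$''. Nothing further is needed.
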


\textbf{Acknowledgement:} The first author gratefully acknowledges Birla Institute of Technology and Science (BITS) Pilani, India, for providing financial support.


\end{document}